\newtheorem{thm}{Theorem}[section]
\newtheorem{theorem}[thm]{Theorem}
\newtheorem{lemma}[thm]{Lemma}
\newtheorem{proposition}[thm]{Proposition}
\theoremstyle{definition}
\newtheorem{definition}[thm]{Definition}
\theoremstyle{remark}
\newtheorem{question}{Question}
\newtheorem{claim}{Claim}
\def\val{\operatorname{val}}
\def\dom{\operatorname{dom}}
\begin{document}
\title{
Some results on the  
$\pi$-weight of countable
Fr\'echet-Urysohn spaces
}
\author{Alan Dow}
\address{Department of Mathematics and Statistics, University
of North Carolina at Charlotte, Charlotte, NC 28223, USA}

\begin{abstract}
The $\pi$-weight  spectrum for countable regular Fr\'echet-Urysohn
   spaces  is the set of uncountable cardinals 
   that are equal to the $\pi$-weight for some such space. 
We determine this $\pi$-weight spectrum in  the 
standard 
 Miller rational perfect set model  and in the  Random real model.
 \end{abstract}

\keywords{Fr\'echet-Urysohn, $\pi$-weight}
\subjclass{
03E50,
03E35,
54A35,
54D55}

\maketitle

\section{Introduction}

A topological space is Fr\'echet-Urysohn providing it satisfies
that if a point is in the closure of a set, there is an
$\omega$-sequence
from that set converging to the point. In this paper we
assume that all topologies are regular and Hausdorff.
A $\pi$-base for a topology
is a family of non-empty open sets such that every non-empty
member of the topology contains one. The $\pi$-weight of a topological
space is the minimum cardinality of a $\pi$-base. Determining the possible
values of the $\pi$-weights of countable Fr\'echet-Urysohn spaces,
 i.e. the $\pi$-weight spectrum, seems to be an interesting challenge.
 Malyhin's problem for countable Fr\'echet-Urysohn
 topological groups motivated
 Juhasz's  question  of whether the existence of a countable 
 Fr\'echet-Urysohn space with uncountable $\pi$-weight could
 be proven in ZFC.  Naturally, for every cardinal $\kappa <\mathfrak p$,
  every countable dense subset of the Tychonoff product $2^\kappa$ 
  is a countable Fr\'echet-Urysohn space with $\pi$-weight equal to $\kappa$. 
 The other general known results 
 concerning the possible uncountable $\pi$-weights of countable 
 Fr\'echet-Urysohn spaces  seem to be these three. 
 There is a space with $\pi$-weight at least $\mathfrak b$
  \cite{DowFrechet}, there are models in which spaces 
  may have  $\pi$-weight $\mathfrak b = \mathfrak c>\omega_1$
  but none with $\pi$-weight strictly between $\omega$ and
   $\mathfrak c$  \cite{Hrusak-Ramos}, in the Cohen model
   there are no spaces with $\pi$-weight greater than $\aleph_1$.
   It was also shown in \cite{Hrusak-Ramos} that in many models,
   in fact 
   those satisfying a weak parametrized $\diamondsuit$ principle,
   there are spaces with $\pi$-weight $\aleph_1$. 

   One natural source of examples of Fr\'echet-Urysohn spaces
   is $C_p(X)$ for a space $X\subset \mathbb R$ 
   that is a $\gamma$-set (see \cite{Gerlits} and \cite{haberMiller}). 
In particular countable dense subsets of such   $C_p(X)$ spaces
yield countable Fr\'echet-Urysohn spaces with $\pi$-weight
equal to the cardinality of the $\gamma$-set. 
    It is well-known that, in Laver's model of the Borel conjecture,
     there are no uncountable $\gamma$-sets and so these
     were not a ZFC source of countable Fr\'echet-Urysohn spaces
     of uncountable $\pi$-weight. One of the very interesting
     results in \cite{haberMiller}*{Theorem 1.3} is that in the usual Miller model,
     namely the forcing extension by the countable support iteration of the Miller trees poset, 
     there are no $\gamma$-sets of cardinality greater
     than $\aleph_1$. The same result holds  
     in the random real model (adding any number of random
     reals) as can be deduced 
     from   \cite{Goldstern}*{Lemma 2.29} as explained
     in   Proposition \ref{randomgamma}.   

     The main results of this paper are to determine the very different
     $\pi$-weight
     spectrum of countable Fr\'echet-Urysohn spaces in the Miller model
     and in the $\kappa$ random real model for any cardinal $\kappa>\omega_1$.
     As is well-known, the Miller model
     satisfies that $\omega_1 = \mathfrak b < \mathfrak d = \mathfrak c$,
     while $\omega_1 = \mathfrak b = \mathfrak d < \kappa = \mathfrak c$
     holds in the random real models. In the Miller model, like the Cohen model, 
     every countable regular Fr\'echet-Urysohn space has $\pi$-weight
     at most $\aleph_1$. In any model obtained  by adding $\kappa$ random
     reals over a model of CH, every cardinal $\lambda$ with
      $\omega \leq \lambda \leq \kappa$ equals 
      the $\pi$-weight of some countable regular Fr\'echet-Urysohn space.

\section{Fr\'echet-Urysohn spaces in the Miller model}

 In this section we prove this next theorem.   The proof is completed
 at the end of the section.

 \begin{theorem}[CH] In the forcing extension by\label{mainMiller} a countable
 support iteration of length $\omega_2$ of Miller's rational perfect set poset,
  every regular Fr\'echet-Urysohn countable space has
   $\pi$-weight at most $\aleph_1$.
\end{theorem}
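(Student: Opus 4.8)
The plan is to run a reflection argument along the countable support iteration and reduce the theorem to a single-step fusion lemma for Miller's poset. Write $V_\alpha=V[G_\alpha]$ for the intermediate models and let $X=(\omega,\tau)$ be a regular Fr\'echet-Urysohn space in $V_{\omega_2}$. Since the iteration is proper and $\aleph_2$-cc, each $V_\alpha$ with $\alpha<\omega_2$ satisfies $\mathfrak c=\aleph_1$, so each $\tau_\alpha:=\tau\cap V_\alpha$ has size at most $\aleph_1$; moreover $\tau=\bigcup_{\alpha<\omega_2}\tau_\alpha$ because every subset of $\omega$ has a name with countable, hence bounded, support. It therefore suffices to produce a single $\delta<\omega_2$ for which $\tau_\delta$ is a $\pi$-base, since then $\pi w(X)\le\aleph_1$. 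I would locate a suitable such $\delta$ by a L\"owenheim--Skolem argument: fix $M\prec H(\theta)$ with $|M|=\aleph_1$, $\aleph_1\subseteq M$, containing a name for $X$ and for the iteration, and set $\delta=M\cap\omega_2$, so that $\operatorname{cf}(\delta)=\aleph_1$ and the reals of $V_\delta=\bigcup_{\alpha<\delta}V_\alpha$ are exactly those captured below $\delta$.

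The next step is to reformulate $\pi$-density combinatorially. For non-isolated $p$, let $F_p$ be the filter on $\omega$ whose elements are the punctured neighborhoods of $p$; then $A$ is $\tau$-open iff $A\setminus\{p\}\in F_p$ for every $p\in A$, and the Fr\'echet-Urysohn property says precisely that every $F_p$-positive set contains a pseudo-intersection of $F_p$, i.e.\ an infinite set converging to $p$. Under this description $\tau_\delta$ is a $\pi$-base iff every nonempty open $U$ contains a nonempty $V_\delta$-open set, i.e.\ iff the $V_\delta$-interior $\bigcup\{V\in V_\delta : V\text{ is }V_\delta\text{-open},\ V\subseteq U\}$ is nonempty. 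So I must rule out a nonempty open $U$ each of whose points lies in the $\tau_\delta$-closure of $\omega\setminus U$; equivalently, a proper $\tau$-closed set $D=\omega\setminus U$ that is dense in the coarser topology $\tau_\delta$.

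The heart of the proof, and the main obstacle, is the single-step fusion lemma. View such a hypothetical $U$ as added over $V_\delta$ by the tail iteration $P_{[\delta,\omega_2)}$ through a name $\dot U$, with a condition $q\Vdash$ ``$\dot U$ is nonempty, open, and has empty $V_\delta$-interior''. Choosing $n$ and $q'\le q$ with $q'\Vdash n\in\dot U$, regularity forces a closed neighborhood $\overline{\dot W}\subseteq\dot U$ of $n$, and the Fr\'echet-Urysohn property yields a name for a sequence converging to $n$ inside $\dot W$. I would then fuse through the superperfect trees, using continuous reading of names, to extract from $\dot U$ a single $V_\delta$-coded infinite set converging to $n$ together with $V_\delta$-coded neighborhoods of its points, assembling an honest $V_\delta$-open $V\ni n$ with a condition forcing $V\subseteq\dot U$. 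This is where Miller's tree combinatorics are essential --- it is the same mechanism by which one shows there are no $\gamma$-sets of size $>\aleph_1$ in this model --- and it uses $\operatorname{cf}(\delta)=\aleph_1$ together with the fact that the reals of $V_\delta$ remain unbounded ($\mathfrak b=\aleph_1$), so that the convergent sequences witnessing the Fr\'echet-Urysohn property cannot all escape the neighborhood filters coded by stage $\delta$. Producing $V$ contradicts the assumption on $\dot U$, so no such $U$ exists and $\tau_\delta$ is a $\pi$-base.

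Finally, I would make the lifting to the full iteration rigorous. Rather than arguing once in the tail, the cleanest route is to isolate the property ``the $V_\delta$-open sets are $\pi$-dense in $X$'' and show it is preserved by Miller forcing and by countable support limits, inside a preservation-theorem framework of the Goldstern type already invoked for the random case. The delicate points are that the property must be put in the right $\sqsubseteq$-increasing form for the iteration theorem and that the single-step fusion survives limits of countable cofinality; limits of uncountable cofinality are harmless since no reals appear there. With the preservation in hand, the reflection of the first paragraph supplies the desired $\delta$ and hence the $\pi$-base of size $\aleph_1$.
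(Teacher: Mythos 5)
Your global architecture does match the paper's: reflect to $\delta=M\cap\omega_2$ for an $M\prec H(\theta)$ of size $\aleph_1$ closed under $\omega$-sequences, and show the stage-$\delta$ clopen sets already form a $\pi$-base via a single-step Miller fusion lemma plus a preservation argument along the countable support iteration. But the two places where you defer the real work are exactly where the proof lives, and what you propose there does not go through as stated. Your single-step plan is to fuse along a superperfect tree and ``extract from $\dot U$ a $V_\delta$-coded convergent sequence together with $V_\delta$-coded neighborhoods of its points, assembling an honest $V_\delta$-open $V\subseteq \dot U$.'' There is no reason the neighborhoods witnessing openness of $\dot U$ at the points of that sequence should be coded in $V_\delta$ (new clopen sets appear cofinally in $\omega_2$), and a fusion through one Miller condition says nothing about a name over the entire tail iteration. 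The paper argues in the opposite direction: assuming $\dot U_\zeta$ contains no nonempty $\tau_\delta$-open set while $m\in\dot U_\zeta$, it observes that $\dot U_\zeta$ is nowhere dense in a sense relativized to the pair $(\tau,\mathcal I)$, where $\mathcal I$ is the family of $\tau$-convergent sequences (all captured in $M[G_\delta]$ by $\omega$-closure); it then uses the Barman--Dow selective-separability result to produce, already in $V[G_\delta]$, a single sequence $A$ converging to $m$ that meets the complement of every $(\tau,\mathcal I)$-nowhere dense set coded in a countable submodel in an infinite set, and preserves this escaping property of the \emph{fixed} set $A$ through the rest of the iteration; $A\setminus U_\zeta$ infinite then contradicts $A\to m\in U_\zeta$ with $U_\zeta$ clopen. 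You never isolate this (or any) invariant that is checkable condition-by-condition; the relativization to $\mathcal I$-closed sets is what makes the set of integers a condition decides into $\dot Y$ again nowhere dense, and that is the engine of both the fusion and the limit step.

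Second, your plan to finish by casting the property ``in the right $\sqsubseteq$-increasing form'' for an off-the-shelf Goldstern/Shelah preservation theorem is precisely what the author reports being unable to do; the paper instead runs a bespoke Abraham-style induction on the length of the iteration (Lemma 2.4), carrying $(M,P_{\gamma_n})$-generic conditions $q_n$ together with names $\dot p_n$ for a descending sequence in $M\cap P_\lambda$, and handling limits via the set $Y_n^-$ of integers forced into $\dot Y_n$ below $r\wedge p_n$. Your suggestion that the Fr\'echet-Urysohn hypothesis enters through $\mathfrak b=\aleph_1$ and unboundedness of the $V_\delta$-reals is also not the actual mechanism; what is used is the existence, in a countable Fr\'echet-Urysohn space, of a convergent sequence almost disjoint from countably many prescribed nowhere dense sets. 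So the skeleton is right, but the heart of the proof --- the correct preserved relation and the single-step and limit-stage arguments for it --- is missing.
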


By a Miller 
tree we mean a sub-tree  $T$ of $\omega^{<\omega}$ 
which satisfies that every branching node
of $T$ has infinitely many immediate successors
 and every $t\in T$ has an extension that is a
  branching node.  We let $\mathbb M$ denote the
  forcing notion consisting of the set
  of Miller trees ordered by 
   $T_2 < T_1$, in the forcing sense, if $T_2\subset T_1$.

The main new idea of the proof is the preservation of the
following bounding notion. 

\begin{definition} Say that  $(\tau,\mathcal I)$ 
is a Fr\'echet-Urysohn pair if 
 $\tau$ is a clopen basis for a topology on $\omega$
 and 
 $\mathcal I$ is a family of $\tau$-converging sequences.

Say that a subset $Y$ of $\omega$ is $\mathcal I$-closed
if $Y$ contains the $\tau$-limit of $I$ for every
$I\in\mathcal I$ that meets $Y$ in an infinite set.
Say that a  set $Y$ is $(\tau , \mathcal I)$-nowhere dense
if $Y$ is $\mathcal I$-closed and 
$U\setminus Y$ is not empty for all $\emptyset\neq U\in \tau$. 

 For a countable elementary submodel $M$ of $H(\mathfrak c^+)$,
  say that a set $A$ is $(M,\tau,\mathcal I)$-bounding 
  if $A\setminus Y$ is infinite for every subset $Y$ of $\omega$
  in $M$ that is $(\tau, \mathcal I)$-nowhere dense.
\end{definition}

 Let us recall that if $G$ is a generic filter for some
 poset and if $\dot Y$ is a $P$-name for a set such
 that $\val_G(\dot Y)$ is a $(\tau, \mathcal I )$-nowhere dense
 set for some Fr\'echet-Urysohn pair $(\tau , \mathcal I)$,
 then there is a $P$-name $\dot Y_1$ satisfying that
  $1_P$ forces that  $\dot Y_1$ is $(\tau, \mathcal I)$-nowhere
  dense and that $\val_G(\dot Y_1) = \val_G(\dot Y)$. 
  Therefore in what follows we simply focus on names that
   are forced by $1$  to be $(\tau, \mathcal I)$-nowhere dense.

\medskip

Our proof of the  theorem will follow Shelah's methods for
proving that countable support iterations of proper almost 
$\omega^\omega$-bounding posets are weakly $\omega^\omega$-bounding. 
More precisely we will utilize the presentation by Abraham,
 see \cite{Abraham}*{4.1}. Unfortunately we have been unable
 to formulate a suitable relation $R$ so that we could just
  quote one of Shelah's many preservation results.
  Nor could we construct a proof based on the Laver method
  of proving that single stage names of reals can give much information
  about general names of reals as in \cite{RZmsep}*{Lemma 2.2}.
  
  For a poset
   $P$, we will use $\Gamma_P$ to denote the canonical name
   for the $P$-generic filter. Recall that if $P\in M$
   where $M$
   is a countable elementary submodel of $H(\theta )$,
   for a suitably large $\theta$, 
   and if $q$ is an $(M,P)$-generic condition 
   then  $q$ forces that  $M[\Gamma_P]$ is a countable
   elementary submodel of $H(\theta)[\Gamma_P] $.
   A subset $S$ of $P$ is said to be pre-dense below
    a condition $p\in P$, if for every $q\leq p$,
     there is an $s\in S$ such that $q$ and $s$ are
     compatible in $P$. Given two conditions $r,p$ of $P$
     let us say that a set $S$ is pre-dense below
     $r\wedge p$ if $S$ is pre-dense below 
     every common extension of $r$ and $p$.

   \medskip

  Naturally we begin with the single stage poset.
  The  
 minimal branching node of $T\in \mathbb{M}$ is denoted as 
$\mathop{stem}(T)$. For each $n\in\omega$,
  $\mathbf{Br}_n(T)$ is the set of branching
  nodes $t$ of $T$ that satisfy the
  set of $s < t$ that are branching nodes
  has cardinality $n$. 
   For  $T_1, T_2\in \mathbb{M}$ and $n\in\omega$,  
   the  relation  $T_2 <_n T_1$ corresponds to
    $T_2 \leq T_1$ and $\mathbf{Br}_n (T_2) = \mathbf{Br}_n(T_1)$.

For $T\in \mathbb{M}$ and $t\in T$, the sub-tree
 $T_t = \{ s\in T : s\leq t \ \mbox{or}\ t\leq s\}$ 
 satisfies that $T_t \leq T$.   For an $n\in\omega$
 and $T_n\in\mathbb{M}$, a standard fusion step
 to construct $T_{n+1} <_n T_n$ is to choose,
 for each $t\in \mathbf{Br}_{n+1}(T_n)$ any condition
  $\tilde T_t < (T_n)_t$ and to let
   $T_{n+1} = \bigcup \{ \tilde T_t : t\in \mathbf{Br}_{n+1}(T_n)\}$. 
Indeed, if $T_0\in \mathbb{M}$ is an element of a countable 
elementary submodel $M$ of $H(\mathfrak c^+)$ and if 
 $\{D_n : n\in \omega\}$ is an enumeration of the dense subsets
 of $\mathbb{M}$ that are elements of $M$, 
  then $ T_\omega = \bigcap \{ T_n : n\in\omega\}$ is an
   $(M,\mathbb{M})$-generic condition if, for
   each $n\in\omega$, $T_{n+1} <_n T_n$ satisfies
   that, $(T_{n+1})_t\in D_n\cap M$ for each 
    $t\in \mathbf{Br}_{n+1}(T_n)$. Notice that it is not
    necessary that $T_{n+1}\in M$, only
    that $(T_{n+1})_t\in M$ for each $
    t\in 
    \mathbf{Br}_{n+1}(T_n)$.

\begin{lemma}  Let $(\tau,\mathcal I)$ be a Fr\'echet-Urysohn pair
and let $A$ be an $(M,\tau,\mathcal I)$-bounding set\label{singlestep} for
a countable elementary submodel $M$ of $H(\mathfrak c^+)$ 
such that $\tau,\mathcal I\in M$.  Then for every
 $\mathbb{M}$ condition $T_0\in M$, 
 there is an $(M,\mathbb{M})$-generic condition 
  $T_\omega $ extending $T$ that    forces that $A$ is
   $(M[ \Gamma_{\mathbb M}],\tau,\mathcal I)$-bounding. 
\end{lemma}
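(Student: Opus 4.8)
The plan is to carry out the standard Miller fusion inside $M$ described above, but to devote cofinally many stages to each relevant name for a nowhere dense set, using the ground-model boundedness of $A$ to force a new element of $A$ outside that set below every branch. First I fix an enumeration $\{D_n : n\in\omega\}\in M$ of the dense subsets of $\mathbb M$ lying in $M$, an enumeration $\{\dot Y_k : k\in\omega\}$ of the $\mathbb M$-names in $M$ that are forced by $1$ to be $(\tau,\mathcal I)$-nowhere dense, and a bookkeeping function $k\colon\omega\to\omega$ taking each value infinitely often. By the remark preceding the lemma, every $(\tau,\mathcal I)$-nowhere dense subset of $\omega$ lying in $M[\Gamma_{\mathbb M}]$ equals $\val_G(\dot Y_k)$ for some $k$ (replace its $M$-name by one forced to be nowhere dense, which again lies in $M$ by elementarity), so it suffices to arrange, for each $k$, that $T_\omega\Vdash A\setminus\dot Y_k$ is infinite.

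The key point is the following ground-model computation. Suppose $S\in M$ is a condition and $\dot Y\in M$ is forced to be $(\tau,\mathcal I)$-nowhere dense; set
$$ W \ =\ \{\,a\in\omega : S\Vdash a\in\dot Y\,\}. $$
Then $W\in M$, and I claim $W$ is $(\tau,\mathcal I)$-nowhere dense. Indeed $W$ is $\mathcal I$-closed: if $I\in\mathcal I$ meets $W$ in an infinite set, then $S$ forces that $I$ meets $\dot Y$ infinitely, hence (as $\dot Y$ is forced $\mathcal I$-closed) $S$ forces the $\tau$-limit of $I$ into $\dot Y$, so that limit lies in $W$. And $W$ is nowhere dense: for $\emptyset\neq U\in\tau$ we cannot have $U\subseteq W$, since that would give $S\Vdash U\subseteq\dot Y$, contradicting that $\dot Y$ is forced nowhere dense. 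Because $A$ is $(M,\tau,\mathcal I)$-bounding, $A\setminus W$ is therefore infinite; and for every $a\in A\setminus W$ the condition $S$ does not force $a\in\dot Y$, so some extension of $S$—and hence, by elementarity, some extension in $M$—forces $a\notin\dot Y$. This is the step I expect to be the crux: it is exactly where both halves of the definition of \emph{$(\tau,\mathcal I)$-nowhere dense} are used, the $\mathcal I$-closedness passing through the convergent sequences of $\mathcal I$ and the density clause through $W\subseteq\dot Y$, converting the boundedness of $A$ in the ground model into the ability to push a new element of $A$ out of the name.

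With this in hand the fusion is routine. At stage $n$, writing $k=k(n)$, I process each branching node $t\in\mathbf{Br}_{n+1}(T_n)$ as follows. The subtree $(T_n)_t$ lies in $M$, so applying the computation above with $S=(T_n)_t$ and $\dot Y=\dot Y_k$ produces a nowhere dense $W_t\in M$; since $A\setminus W_t$ is infinite I may pick $a_t\in A\setminus W_t$ with $a_t>n$ (a natural number, hence in $M$, although $A$ itself need not be), and then a condition $T_t'\leq (T_n)_t$ in $M$ with $T_t'\Vdash a_t\notin\dot Y_k$. By density I extend $T_t'$ to $\tilde T_t\in D_n\cap M$, still forcing $a_t\notin\dot Y_k$, and set $T_{n+1}=\bigcup\{\tilde T_t : t\in\mathbf{Br}_{n+1}(T_n)\}<_n T_n$. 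As explained before the lemma, the resulting $T_\omega=\bigcap_n T_n$ is an $(M,\mathbb M)$-generic condition extending $T_0$.

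Finally I check the conclusion. Fix $k$. For each $n$ with $k(n)=k$, whatever branch the generic filter follows it passes through a unique node of $\mathbf{Br}_{n+1}(T_n)$, below which $T_\omega$ forces some $a_t\in A$ with $a_t>n$ and $a_t\notin\dot Y_k$; since $k(n)=k$ for arbitrarily large $n$, $T_\omega$ forces $A\setminus\dot Y_k$ to contain elements above every $n$, hence to be infinite. As $k$ was arbitrary and every $(\tau,\mathcal I)$-nowhere dense set of $M[\Gamma_{\mathbb M}]$ arises as some $\val_G(\dot Y_k)$, the condition $T_\omega$ forces that $A$ is $(M[\Gamma_{\mathbb M}],\tau,\mathcal I)$-bounding, as required.
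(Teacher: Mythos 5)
Your proposal is correct and follows essentially the same route as the paper: the key step of passing to the ground-model set $W=\{a : S\Vdash a\in\dot Y\}$ (the paper's $(\dot Y_n)^-_{(T_n)_t}$), checking it is $(\tau,\mathcal I)$-nowhere dense and lies in $M$, and then fusing with bookkeeping so that each name is handled cofinally often, is exactly the paper's argument. The only differences are cosmetic (a bookkeeping function versus listing each name infinitely often, and isolating the crux as a standalone computation).
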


\begin{proof}  Fix an enumeration, $\{ D_n : n\in\omega\}$ of
the dense subsets of $\mathbb{M}$ that are elements of $M$. 
Also, fix an enumeration $\{ \dot Y_n : n\in\omega\}$  of
the $\mathbb{M}$-names in $M$ that are forced by $1$
 to be  subsets of $\omega$ that are
   $(\tau , \mathcal I )$-nowhere dense. 
   For convenience ensure that each $\dot Y\in \{\dot Y_n : n\in\omega\}$
   is   listed infinitely many times.
Let $n\in\omega$ and assume that, we have chosen 
 $\{ T_k : k\leq n\}\subset \mathbb{M}$ so that, for
 each $k < n$
 \begin{enumerate}
 \item $T_{k+1} <_k T_k$,
\item  for each $t\in \mathbf{Br}_{k+1}(T_k)$, 
    $(T_k)_t\in M\cap D_k$,
    \item for each $t\in \mathbf{Br}_{k+1}(T_k)$, there
    is an $a\in A$ such that $(T_{k+1})_t $ forces that
      $a\notin \dot Y_k$. 
 \end{enumerate}
 First we explain   how to construct $T_{n+1}$. Fix any
  $t\in \mathbf{Br}_{n+1}(T_n)$.   Consider $\dot Y_n$,
   and let $(\dot Y_n)^-_{(T_n)_t}$ denote the set of 
   all $m\in \omega$ such that $(T_n)_t$ forces
   that $m\in \dot Y_n$. Clearly $(T_n)_t$ forces
that $(\dot Y_n)^-_{(T_n)_t}$ is a subset of $\dot Y_n$.
Since $1$ forces that $\dot Y_n$ is $\mathcal I$-closed
it follows that $(\dot Y_n)^-_{(T_n)_t}$ is $\mathcal I$-closed
and, similarly it follows that $(\dot Y_n)^-_{(T_n)_t}$ is 
$(\tau , \mathcal I)$-nowhere dense. Since, by the induction
assumption $(T_n)_t$ is an element of $M$, it follows
that $(\dot Y_n)^-_{(T_n)_t}$ is also an element of $M$.
Therefore, there is an $n<a\in A$ such that $(T_n)_t$
does not force that $a\in \dot Y_n$.  Choose any
 $\tilde T_{n, t} < (T_n)_t$ in $M$ that forces
 that $a\notin \dot Y_n$. By further extending, if necessary,
  we can also assume that $\tilde T_{n ,t}$ is an element
  of $D_n\cap M$. Then we set $T_{n+1}$ equal to 
   $\bigcup \{ \tilde T_{n,t} : t\in \mathbf{Br}_{n+1}(T_n)\}$. 
   It should be clear that the inductive hypotheses are preserved.

   Now let $T_\omega = \bigcap T_n$. As discussed above
    $T_\omega$ is an extension of $T_0$ that is an
    $(M,\mathbb{M})$-generic condition.  Suppose
    that $T_\omega$ is an element of a generic filter 
     $G$ for $\mathbb{M}$ and let $\dot Y\in M$
     be any $\mathbb{M}$-name that is forced
     by $1$ to be $(\tau , \mathcal I)$-nowhere dense.
     Then fix any $n\in\omega$ such that $\dot Y_n   =\dot Y$.
The family $\{ \tilde T_{n,t} : t\in \mathbf{Br}_{n+1}(T_n)\}$
 is pre-dense below $T_{n+1}$, and therefore also pre-dense
 below $T_\omega$. Choose the unique $t\in \mathbf{Br}_{n+1}(T_n)$
 such that $\tilde T_{n,t}\in G$. By assumption, 
 there is an $n<a\in A$ such that $a\notin \val_{G}(\dot Y_n)$. 
\end{proof}

 A somewhat subtle point of the above construction is that, almost surely,
   $T_{n+1}$ is not an element of $M$. However, for each $t\in 
    \mathbf{Br}_{n+1}(T_{n+1})$, $(T_{n+1})_t$ is an element of $M$.
  If we consider the standard countable support iteration,
    $\langle P_\lambda, \dot Q_\alpha : \lambda\leq\omega_2, \alpha < \omega_2\rangle$
    where, for all $\alpha <\omega_2$, $P_\alpha\Vdash \dot Q_\alpha = \mathbb M$,
    then, officially we can let $p_{n+1}\in P_1$, be defined by
    the property that $1 = p_{n+1}\restriction 0 $ forces that
     $p_{n+1}(0) = (T_{n+1})_{\dot t}$,  where $\dot t$ is the
     $P_0$-name of the unique element $t$ of $\mathbf{Br}_{n+1}(T_{n+1})$ such
     that $(T_{n+1})_t \in  \Gamma_{\dot Q_0}$.  Then, even though $p_{n+1}$ is not
      an element of $M$, $p_{n+1}\restriction 0$ forces that 
       $p_{n+1}(0)$ \textbf{is} an element of $M[\Gamma_{P_0}] = M$. 
       This is a key element of this next proof that is 
       based on \cite{Abraham}*{4.1}.

\begin{lemma}[CH]  Let $\lambda< \omega_2$ and\label{Milleriterate} 
$\langle P_\beta, \dot Q_\alpha : \beta\leq \lambda, 
 \alpha < \lambda\rangle$ be   the countable support iteration where, for each
  $\alpha < \lambda$, $\dot Q_\alpha$ is the $P_\alpha$-name for the Miller tree poset $\mathbb M$.
  Let $(\tau , \mathcal I)$ be a Fr\'echt-Urysohn pair and let 
  $\langle P_\beta, \dot Q_\alpha : \beta\leq \lambda, 
 \alpha < \lambda\rangle$  and $(\tau, \mathcal I)$ be elements of a countable
 elementary submodel $M$ of $H(\mathfrak c^+)$. 
Let $A$ be $(M,\tau,\mathcal I )$-bounding.

Then,  for
 any  $\gamma_0 < \lambda$ and $q_0\in P_{\gamma_0}$ that is
  $(M,P_{\gamma_0})$-generic and that forces that $A$ is
   $(M[\Gamma_{P_{\gamma_0}}],\tau,\mathcal I)$-bounding and
   any $P_{\gamma_0}$-name $\dot p_0$ that is forced by
    $q_0$ to be an element of $M\cap P_\lambda$
and also $q_0\Vdash \dot p_0\restriction \gamma_0\in \Gamma_{P_{\gamma_0}}$,  
  then    there is a $q<p_0$ that is $(M,P_\lambda)$-generic such
  that $q\restriction \gamma_0 = q_0$ and such
  that $q$ forces that $A$ remains $(M[\Gamma_{P_\lambda}],\tau,\mathcal I)$-bounding.
\end{lemma}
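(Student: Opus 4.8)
The plan is to prove the lemma by induction on $\lambda$, separating the successor and limit cases, and to invoke Lemma \ref{singlestep} to treat a single Miller coordinate together with the remark preceding it to restrict attention throughout to names forced by $1$ to be $(\tau,\mathcal I)$-nowhere dense. For the successor case $\lambda=\mu+1$ I would first, when $\gamma_0<\mu$, apply the inductive hypothesis to the iteration up to $\mu$ with the name $\dot p_0\restriction\mu$, obtaining an $(M,P_\mu)$-generic $q'$ with $q'\restriction\gamma_0=q_0$ that forces $\dot p_0\restriction\mu\in\Gamma_{P_\mu}$ and that $A$ remains $(M[\Gamma_{P_\mu}],\tau,\mathcal I)$-bounding (when $\gamma_0=\mu$ simply take $q'=q_0$). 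Since $q'$ is $(M,P_\mu)$-generic, $M[\Gamma_{P_\mu}]$ is an elementary submodel in which $\dot p_0(\mu)$ is a Miller tree and $A$ is bounding, so Lemma \ref{singlestep} applied inside the extension yields an $(M[\Gamma_{P_\mu}],\mathbb M)$-generic tree $T_\omega$ below $\dot p_0(\mu)$ forcing $A$ to stay bounding; setting $q=q'{}^\frown\dot T_\omega$ finishes this case.

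The limit case is the heart of the argument and follows the fusion in \cite{Abraham}*{4.1}. Let $\delta=\sup(M\cap\lambda)$, fix an increasing sequence $\langle\gamma_n:n\in\omega\rangle$ in $M\cap\lambda$ beginning with the given $\gamma_0$ and cofinal in $\delta$, and fix enumerations $\{D_n:n\in\omega\}$ of the dense subsets of $P_\lambda$ in $M$ and $\{\dot Y_n:n\in\omega\}$ of the $P_\lambda$-names in $M$ forced by $1$ to be $(\tau,\mathcal I)$-nowhere dense, each such name listed infinitely often. I would then build $(M,P_{\gamma_n})$-generic conditions $q_n$ and $P_{\gamma_n}$-names $\dot p_n$ for elements of $M\cap P_\lambda$ so that $q_{n+1}\restriction\gamma_n=q_n$, each $q_n$ forces $A$ to be $(M[\Gamma_{P_{\gamma_n}}],\tau,\mathcal I)$-bounding and $\dot p_n\restriction\gamma_n\in\Gamma_{P_{\gamma_n}}$, and $q_n$ forces both $\dot p_{n+1}\le\dot p_n$ and $\dot p_{n+1}\in D_n$. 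At step $n$, working below $q_n$ inside $M[\Gamma_{P_{\gamma_n}}]$, the set $(\dot Y_n)^-_{\dot p_n}$ of integers forced by $\dot p_n$ into $\dot Y_n$ is, exactly as in Lemma \ref{singlestep}, a $(\tau,\mathcal I)$-nowhere dense member of $M[\Gamma_{P_{\gamma_n}}]$; since $A$ is bounding there I may choose $a\in A$ with $a>n$ and $a\notin(\dot Y_n)^-_{\dot p_n}$, extend $\dot p_n$ to some $p'\le\dot p_n$ forcing $a\notin\dot Y_n$, and then push $p'$ into $D_n$ to obtain $\dot p_{n+1}$. Applying the inductive hypothesis to the iteration up to $\gamma_{n+1}$ with starting data $q_n$ and the name $\dot p_{n+1}\restriction\gamma_{n+1}$ then produces $q_{n+1}$.

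Finally I would set $q=\bigcup_n q_n$, a condition of $P_\delta\subseteq P_\lambda$ with $q\restriction\gamma_0=q_0$. The decisive verification is that $q$ forces $\dot p_n\in\Gamma_{P_\lambda}$ for every $n$: each coordinate $\xi$ in the countable, hence $\delta$-bounded, support of $\dot p_n$ lies below some $\gamma_m$ with $m\ge n$, and since the $\dot p_k$ are decreasing and $q\restriction\gamma_m=q_m$ forces $\dot p_m\restriction\gamma_m\in\Gamma$, the coordinate $\dot p_n(\xi)$ is forced into $\Gamma_{\dot Q_\xi}$. Taking $n=0$ gives $q\le p_0$; and since $\dot p_{n+1}\in D_n\cap M$ is forced into $\Gamma$, the family $D_n\cap M$ meets the generic below $q$, so $q$ is $(M,P_\lambda)$-generic. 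For preservation of bounding, every $(\tau,\mathcal I)$-nowhere dense $Y\in M[\Gamma_{P_\lambda}]$ equals $\val(\dot Y)$ for some $\dot Y$ in the enumeration (using the remark to pass to a name forced by $1$ to be nowhere dense); as $\dot Y$ is listed infinitely often and at each such step a fresh $a\in A$ with $a>n$ is forced out of $\dot Y$, the set $A\setminus Y$ is infinite, so $q$ forces $A$ to be $(M[\Gamma_{P_\lambda}],\tau,\mathcal I)$-bounding.

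I expect the main obstacle to be the limit-stage bookkeeping: arranging the names $\dot p_n$ and the conditions $q_n$ so that the single fusion limit $q$ simultaneously realizes every $\dot p_n$ in the generic, which demands the decreasing coherence of the $\dot p_n$ together with control of their supports inside $M\cap\lambda$, while the infinitely-repeated treatment of each $\dot Y_n$ secures the bounding property at the limit. The replacement of an arbitrary nowhere-dense $Y\in M[\Gamma_{P_\lambda}]$ by a name forced by $1$ to be nowhere dense, and the verification that $(\dot Y_n)^-_{\dot p_n}$ is again $(\tau,\mathcal I)$-nowhere dense and belongs to $M[\Gamma_{P_{\gamma_n}}]$, are the pressure points where the Fr\'echet-Urysohn pair structure must be used carefully.
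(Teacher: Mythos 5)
Your proposal is correct and follows essentially the same route as the paper's proof: induction on $\lambda$ with the successor case absorbed by Lemma \ref{singlestep}, and at limits the Abraham-style fusion along a sequence $\langle\gamma_n\rangle$ cofinal in $\sup(M\cap\lambda)$, where at stage $n$ one uses the set of integers that $\dot p_n$ forces into $\dot Y_n$ to find some $a\in A$ that a further extension in $M\cap P_\lambda$ forces out of $\dot Y_n$ before applying the induction hypothesis to get $q_{n+1}$. The one point where the paper is more explicit than you are is the definition of $(\dot Y_n)^-_{\dot p_n}$: it must be computed relative to the quotient over the partial generic (the paper formalizes this as predensity below $r\wedge p_n$ for some $r\in\Gamma_{P_{\gamma_n}}$), since that is precisely what guarantees the witnessing extension forcing $a\notin\dot Y_n$ can be chosen with its restriction to $\gamma_n$ in the generic filter, as your induction hypothesis requires.
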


\begin{proof} We prove the Lemma by induction on $\lambda$.
Let $\{ \dot Y_n : n\in\omega\}$ enumerate all the 
$P_\lambda$-names in $M$ such that $1_{P_\lambda}$ forces
that $\dot Y_n$ is $(\tau,\mathcal I)$-nowhere dense. 
Ensure that each member of $\{ \dot Y_n : n\in \omega\}$
is enumerated infinitely many times. Also let $\{ D_n : n\in\omega\}$
be an enumeration of the dense subsets of $P_\lambda$ that
 are elements of $M$.

It is a standard
result of proper forcing, combined with Lemma \ref{singlestep}, that we
may assume that $\lambda$ is a limit ordinal.  Let $\delta$ be the supremum
of $M\cap \lambda$.  Choose any strictly increasing sequence
 $\langle\gamma_n : n\in \omega\rangle$ of ordinals
 in $M\cap \lambda$ that is cofinal in $\delta$. Following
  \cite{Abraham}*{Theorem 4.1}, we define, by induction,
  conditions $q_n\in P_{\gamma_n}$ that are $(M,P_{\gamma_n})$-generic,
  and $P_{\gamma_n}$-names  $\dot p_n$ such that:
  \begin{enumerate}
\item $q_{n+1}\restriction \gamma_n = q_n$,
\item $q_n$ forces that $\dot p_n$ is in $P_\lambda\cap M$  and extends
 $\dot p_{n-1}$, and $q_n$ also forces that $\dot p_n\restriction \gamma_n$
  is in $\Gamma_{P_{\gamma_n}}$,
  \item $q_n$ forces that $A$ is $(M[\Gamma_{P_{\gamma_n}}],\tau, \mathcal I)$-bounding,
 \item for $n>1$,  $q_n$ forces\label{four} that if $p_n\in P_\lambda\cap M$ 
  is equal to $\dot p_n$, then $p_n\in D_n$ and there is an $n<a\in A$ such that
     $p_n\Vdash a\notin \dot Y_n$.
  \end{enumerate}

  If the recursive construction succeeds, then, as per the proof
  of \cite{Abraham}*{Lemma 2.8}, the condition 
   $q = \bigcup_n q_n$ is  $(M,P_\lambda)$-generic
   and $q< p_0$. Moreover if $q$ is an element of
    a $P_\lambda$-generic filter $G$, 
    then,  for each $n\in\omega$,
      there is a $p_n\in P_\lambda\cap M$
     such that $p_n = \val_{G}(\dot p_n)$,
     and, by item \ref{four}, there is an
      $n < a \in A\setminus \val_{G}(\dot Y_n)$.
   Therefore, if $q$ is an element of a $P_\lambda$-generic
   filter $  G$,  we will have that $A$ is $(M[G],\tau,\mathcal I)$-bounding
   as required.

   Assume that $0<n\in\omega$ and that $q_{k}$ and $\dot p_k$ have
   been chosen for all $k<n$ and that $q_{n-1}$ is
    $(M,P_{\gamma_{n-1}})$-generic and 
   forces
   that $A$ is  $(M[\Gamma_{P_{\gamma_{n-1}}}], \tau,\mathcal I)$-bounding.
   Let $q_{n-1}$ be an element of  a $P_{\gamma_{n-1}}$-generic filter
    $G_{\gamma_{n-1}}$ and let $p_{n-1} \in M$ be the valuation of 
     $\dot p_{n-1}$ by $G_{\gamma_{n-1}}$.  
Consider the
      $P_\lambda$-name $\dot Y_n$ in $M[G_{\gamma_{n-1}}]$ 
      and let $Y_n^-$ be the set of all integers
       $j$ such that,
       for some $r\in G_{\gamma_{n-1}}$,
       the set of conditions that 
       force that $j$ is in $\dot Y_n$
       is predense below 
        $r\wedge p_{n}$ . 
Since $p_0$ forces that $\dot Y_n$ is $(\tau,\mathcal I)$-nowhere
dense and $p_n<p_0$ forces that $Y_n^-$ is a subset of 
 $\dot Y_n$, it follows that $Y_n^-$ is also
  $(\tau,\mathcal I)$-nowhere dense. It is evident
  that $Y_n^-$ does not contain any non-empty $U\in \tau$,
   and, by the definition of the forcing relation,
    if $Y_n^-\cap I$ is infinite for some $I\in \mathcal I$,
    then $p_{n}$ forces, over $V[G_{\gamma_{n-1}}]$,
    that the $\tau$-limit of $I$ is 
    in $\dot Y_n$. Therefore, we have that $A\setminus Y_n^-$ 
    is an infinite set. Choose any $n < a\in A\setminus Y_n^-$
    and note that there is a condition $\tilde p_n < p_{n-1} $ in $M\cap P_\lambda$
    such that  $\tilde p_n\restriction \gamma_{n-1}\in G_{\gamma_{n-1}}$
    and 
    $\tilde p_n \Vdash a\notin \dot Y_n$. 
    By further extending $\tilde p_n$ we may assume that it is
    also
    an element of $D_n\cap M$. Then, as in \cite{Abraham}*{2.8},
return to the ground model, and let $\dot p_n$ be a
 $P_{\gamma_{n-1}}$-name that is forced by $q_n$ to equal 
 such a condition $\tilde p_n\in M$ with the above properties.
 Since $\gamma_n<\lambda$, we can apply the induction hypothesis
 to secure our condition $q_n$ that is $(M,P_{\gamma_n})$-generic
 such that   $q_n\restriction \gamma_{n-1} = q_{n-1}$ and 
 such that $q_n$ 
 forces that $\dot p_n\restriction \gamma_n\in \Gamma_{P_{\gamma_n}}$
 and that $A$ is  $(M[\Gamma_{P_{\gamma_n}}],\tau,\mathcal I)$-bounding.
\end{proof}

Now we prove our main result about Miller forcing and 
countable Fr\'echet-Urysohn spaces. 

\bgroup

\def\proofname{Proof of Theorem \ref{mainMiller}}

\begin{proof}  We are working in a ground model of CH.
Let $\langle P_\beta , \dot Q_\alpha : \beta\leq \omega_2, \alpha < \omega_2\rangle$
be the countable support iteration in which, for every $\alpha<\omega_2$,
 $\dot Q_\alpha$ is the $P_\alpha$-name for $\mathbb M$. 
 Let $\{ \dot U_\xi : \xi < \omega_2\} $ be a list of
 $P_{\omega_2}$-names for non-empty subsets of $\omega$ such that $1$
 forces that the family forms a clopen basis for a Hausdorff
 Fr\'echet-Urysohn topology on $\omega$. Since $P_{\omega_2}$ is
 proper, has a dense subset of cardinality $\aleph_2$,  and 
 satisfies the $\aleph_2$-chain condition every regular Hausdorff
 topology on
  $\omega$ can be assumed to have such a basis. 

  Choose any elementary submodel $M$ of $H(\aleph_3)$ satisfying
  that $\{\dot U_\xi : \xi\in\omega_2\}\in M$, $M^\omega\subset M$,
  and such that $M$ has cardinality $\aleph_1$. It follows
  that $\mu = M\cap \omega_2$ is an element of $\omega_2$.
  Let $G_\mu$ be any $P_\mu$-generic filter and 
  let $\tau = \{ \val_{G_\mu}(\dot U_\xi) : \xi\in \mu\}$.
  We prove that $1_{P_{\omega_2}}$ forces  that
    $\{ \dot U_\xi : \xi <\mu\}$ is a $\pi$-basis for  the
   final topology  given by
     $\{ \dot U_\xi : \xi < \omega_2\}$. 
     Since $P_{\omega_2}$ is proper and satisfies
     that $\aleph_2$-cc, it follows
     that $M[G_\mu]$, like $M$,
     is closed under $\omega$-sequences. Additionally,
      $M[\Gamma_{P_{\omega_2}}]$ is forced by $1$
      to be an elementary submodel of $H(\aleph_3)[\Gamma_{P_{\omega_2}}]$.

Consider any $\mu \leq \zeta < \omega_2$ and suppose
there is a condition $p\in P_{\omega_2}$ that forces
that $ U  \setminus \dot U_\zeta$ is not empty
for all non-empty $U \in \tau$. By possibly extending $p$
we may assume there is an integer $m$ 
such that $p\Vdash m\in \dot U_\zeta$.

   It follows by standard elementarity that, in $V[G_\mu]$,
   $\tau$ is a Fr\'echet-Urysohn topology on $\omega$.
  Let $\mathcal I$  denote the family of $\tau$-converging sequences. 
  Since $M[G_\mu]$ is closed under $\omega$-sequences each
   $I\in \mathcal I$ is an element of $M[G_\mu]$. Therefore,
  by elementarity, every $I\in\mathcal I$ is a converging
  sequence with respect to the final topology.

  Now observe that $p$ forces that $\dot U_\zeta$
  is a $(\tau,\mathcal I)$-nowhere dense set and
  that $m\in \dot U_\zeta$. Let $p,\tau,\mathcal I$ and 
  $\dot U_\zeta$ be
  elements of a countable elementary submodel $\tilde M$
  of $H(\aleph_3)[G_\mu]$. Note that each
   $(\tau,\mathcal I)$-nowhere dense set $Y$ in $\tilde M$
   is truly a $\tau$-nowhere dense set.  
   It was proven in \cite{Barman} that there is a 
   sequence $A\in \mathcal I$ that converges to $m$
   and satisfies that $A\cap Y$ is finite for
   every one of the countably many $\tau$-nowhere
   dense sets $Y$ in $\tilde M$. In particular, 
   $A$ is $(\tilde M,\tau,\mathcal I)$-bounding.
   By Lemma \ref{Milleriterate}, there is a condition $q< p$
   that forces that $A$ is $(\tilde M[G_{\omega_2}],\tau,
   \mathcal I)$-bounding. It follows that
    $A\setminus \val_{G_{\omega_2}}(\dot U_\zeta)$ is infinite,
    and contradicts the fact that $A$ converges to $m$
    with respect to the final topology which contains
     $\val_G(\dot U_\zeta)$ as a clopen set.
\end{proof}

\egroup

\section{Fr\'echet-Urysohn spaces in the random real model}

In this section we prove that in the standard random real model,
every infinite $\kappa\leq \mathfrak c$ equals the $\pi$-weight
of some countable Fr\'echet-Urysohn space. The construction
of these spaces are variations of the construction in
 \cite{DowFrechet}.

\bigskip

Our base space is $\omega^{<\omega}$ considered with the tree ordering,
and we fix an order-preserving   enumeration
 $\{ t_\ell : \ell\in\omega\}$ of
 it. 
   Order-preserving   means
 that $t_n  < t_\ell  $ implies
 that $n<\ell$ and
 that $t_n  = t^\frown k  $
 and $t_\ell = t^\frown (k+1)$ implies
   $n<\ell$. 
 For $t\in \omega^{<\omega}$, 
 $[t]$ denotes the
 set of all $s$ that equal or extend $t$.
\bigskip

 Let $\tau_0$ denote the topology on $\omega^{<\omega}$
 using the elements of $\{ [t] : t\in \omega^{<\omega}\}$
 and their  complements  as a clopen subbasis. 
 For any $t,\sigma \in \omega^{<\omega}$, 
 we let $t ^\frown \sigma$ denote the function with domain
  $\mathop{dom}(t)+\dom(\sigma)$  satisfying 
  that $t \subset t ^\frown \sigma$ and
  for $(t^\frown \sigma)(|t|+i) = \sigma(i)$ for $i\in \dom(\sigma)$. 

\bigskip

For any index set $I$, let $\mathcal M_I$ denote
the measure algebra (a Boolean algebra) on $\omega^I$. That is,
 for each $i\in I$ and $m\in\omega$, 
  the clopen set $[( i, m)]= \{ \rho\in \omega^I : 
   \rho(i) = m\}
  $ has measure $\frac{1}{2^{m+1}}$. We will
  let $\mu_I$ denote the measure on the elements
  of $\mathcal M_I$.
If $I$ is countable, then $\mathcal M_I$ is
the set of  Borel subsets of $\omega^I$
modulo the sets of measure 0. $\mathcal M_I$
is (forcing) ordered by $b_1 \leq  b_2$ providing
 $\mu(b_1\setminus b_2) = 0$. 
For any $B \subset I$, let
  $\pi_B$ denote
 the projection map from $\omega^I$ onto $\omega^{  B}$. 
 Then, for any uncountable set $I$,
    $\mathcal M_I$ is equal to 
    the family 
      $\bigcup \{ \pi_{B}^{-1}(b) : b\in \mathcal M_B,
       \ \ B\in [I]^{\aleph_0}\}$.
Also for any countable $B\subset I$ and $b\in \mathcal M_B$,
 we can say that the support of $\pi_B^{-1}(b)$ is
 contained in $B$ and the value of 
  $\mu_I(\pi_B^{-1}(b))$ is equal to 
    $\mu_B(b)$.  
    It is easily checked
    that $\mu_I(\pi_B^{-1}(b))$ is well-defined.
    Finally, for $b_1,b_2\in \mathcal M_I$,
    we again say that $b_1 \leq b_2$ providing
     $\mu_I(b_1\setminus b_2) = 0$.
     Naturally when we are forcing with $\mathcal M_I$
     we mean that we are forcing with the poset
     of non-zero elements. 

 \medskip

Before proceeding with the construction of our examples,
 we complete the proof using the results 
from  \cite{Goldstern}, and other fundamental references, 
that every $\gamma$-set in such
 random reals models has cardinality at most $\aleph_1$. 

\begin{proposition}[CH] In any forcing\label{randomgamma} extension by 
adding random reals, every $\gamma$-set has cardinality
at most $\aleph_1$.
\end{proposition}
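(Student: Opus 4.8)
The plan is to show that a $\gamma$-set of size $\aleph_2$ cannot survive the random real extension, by exploiting the combinatorial characterization of $\gamma$-sets together with the measure-theoretic structure of the random forcing $\mathcal{M}_I$. Recall the Gerlits--Nagy characterization: $X \subseteq \mathbb{R}$ is a $\gamma$-set if and only if for every sequence of open $\omega$-covers of $X$ one can select a single member from each so that the chosen sets form a $\gamma$-cover (every point lies in all but finitely many). Equivalently, via the standard reformulation, $\gamma$-sets correspond to a strong selection principle $S_1(\Omega,\Gamma)$. The reference \cite{Goldstern}*{Lemma 2.29} is a preservation statement about random forcing; the first step is to identify precisely which property it preserves and to phrase the failure of ``$X$ is a $\gamma$-set'' as a failure of that property for some countable fragment of $X$.

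The key reduction uses that we are forcing over a model of CH and that $\mathcal{M}_I$ has the structure described above: every condition $b \in \mathcal{M}_I$ has countable support, $b = \pi_B^{-1}(b')$ for some $B \in [I]^{\aleph_0}$ and $b' \in \mathcal{M}_B$. The plan is to suppose toward a contradiction that $\dot X$ is forced to be a $\gamma$-set of size $\aleph_2$, and to take a witnessing sequence of open covers together with the forced $\gamma$-selection. Since every real and every $\mathcal{M}_I$-name for a real depends on only countably many coordinates, and since the relevant selection function is a name for a single real (a function $\omega \to \omega$ choosing one open set per cover), I would use a Löwenheim--Skolem / elementary submodel argument to find a countable set $B \subseteq I$ and an intermediate submodel $N$ of size $\aleph_1$ capturing the first $\aleph_1$ points of $\dot X$ together with all relevant names. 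The forcing $\mathcal{M}_I$ factors as $\mathcal{M}_B \times \mathcal{M}_{I \setminus B}$, so one views the full extension as first adding the random real for $B$ (and all coordinates in $N$) and then adding the remaining randoms; the point is that the tail forcing is again random and cannot add the ``selector'' that would be needed for the extra $\aleph_2$-many points.

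Concretely, the main mechanism is a \emph{fusion-free} measure argument: given an $\omega$-cover and a name for a selected element, the measure of the condition forcing a fixed point $x$ into the chosen set can be estimated, and summing these estimates across a $\gamma$-cover forces a convergent-tail condition that a random real of measure-theoretic genericity violates for too many points simultaneously. Here \cite{Goldstern}*{Lemma 2.29} supplies the crucial preservation: random forcing does not add a slalom (or does not change the relevant covering number) strong enough to produce a $\gamma$-selection for an $\aleph_2$-sized ground-model-coded family. I would invoke that lemma to conclude that any $\gamma$-set in the extension is, modulo a set of size $\aleph_1$, already captured by the intermediate model $N$, and since $N$ models CH-like bounds (being of size $\aleph_1$ and $\omega$-closed), the $\gamma$-set has size at most $\aleph_1$.

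\emph{The main obstacle} I anticipate is the precise translation between the measure-preservation statement of \cite{Goldstern}*{Lemma 2.29} and the selection-principle formulation of $\gamma$-sets: Goldstern's lemma is stated in the language of preservation of certain bounding/domination-type relations under random forcing, whereas $\gamma$-sets are most naturally described via open $\omega$-covers. Bridging these requires the standard but delicate encoding of $\gamma$-covers of a separable metric space as combinatorial objects (sequences in $\omega^\omega$ or slaloms), and verifying that the encoding is absolute enough that the preservation lemma applies verbatim. Closing this gap — showing that the failure to extend the $\gamma$-selection to $\aleph_2$-many points is exactly the combinatorial configuration \cite{Goldstern}*{Lemma 2.29} rules out — is where the real work lies, and it is what the phrase ``as can be deduced from \cite{Goldstern}*{Lemma 2.29}'' in the introduction is promising to carry out.
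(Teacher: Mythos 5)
There is a genuine gap, and you have in fact flagged it yourself: your final paragraph concedes that the translation between the selection-principle formulation of $\gamma$-sets and the preservation statement in \cite{Goldstern} is ``where the real work lies'' and is left undone. The missing idea is that no such translation is needed, because one should not work with $\omega$-covers and $\gamma$-selections at all. The paper's proof first applies two classical implications: every $\gamma$-set has Rothberger's property $C''$ (Gerlits--Nagy), and every set with property $C''$ has strong measure zero (Rothberger). After that reduction the problem is exactly the one the Goldstern--Judah--Shelah machinery is built for --- their paper is about strong measure zero sets --- and no encoding of covers as slaloms is required. Your proposed ``fusion-free measure argument,'' estimating the measure of conditions forcing a point into a selected member of an $\omega$-cover, is never carried out and is not what \cite{Goldstern}*{Lemma 2.29} provides; as used in the paper, the relevant statements are Fact 1.16 (the tail forcing $\mathcal M_{\kappa\setminus M}$ is \emph{strongly} $\omega^\omega$-bounding) and Lemma 2.30 (for such a forcing, an intersection $\bigcap_{h\in\mathcal H}\bigcup_k[\nu^h(k)]$ of SMZ-witnessing covers, taken over a dominating family $\mathcal H$ from the ground model, contains no reals outside the intermediate model).

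Your outer scaffolding is essentially right and matches the paper: take an elementary submodel $M$ of size $\aleph_1$ closed under $\omega$-sequences, restrict the name to $M$ to get $X_M\subseteq X$ in $V[G_M]$, and view $V[G]$ as an extension of $V[G_M]$ by the tail random forcing. But two further points are needed to finish, and both depend on the SMZ reduction. First, since random forcing is $\omega^\omega$-bounding, $\mathcal H=\omega^\omega\cap V$ is dominating in $V[G_M]$, so for each $h\in\mathcal H$ one can choose $\nu^h$ witnessing strong measure zero of $X_M$ for $h$; by elementarity these covers also cover all of $X$. Second, Lemma 2.30 then pins $X$ inside $V[G_M]\cap 2^\omega$, which has size $\aleph_1$ by CH. Without the reduction to strong measure zero, the object you would need to control is a name for a $\gamma$-selection over $\aleph_2$-many points, and nothing in \cite{Goldstern} speaks to that directly; this is precisely the unclosed gap in your proposal.
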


\begin{proof}
It was shown in \cite{Gerlits}, where $\gamma$-sets were introduced,
that every $\gamma$-set has the property of Rothberger denoted by   $C''$.
Rothberger \cite{Rothberger} % Fundamenta 30, 1938
showed that every set of reals with property
 $C''$ has strong measure zero. Following the notation
 of \cite{Goldstern}, a set $X\subset 2^\omega$ has strong
 measure zero, if for every $h\in \omega^\omega$,
  there is a function $\nu^h : \omega\mapsto 2^{<\omega}$
  satisfying that $\nu^h(k)\in 2^{h(k)}$ for all $k\in\omega$,
  such that $X\subset \bigcup_{k\in\omega}   [\nu^h(k)]$.

  Now, suppose that $\dot X$ is an $\mathcal M_\kappa$-name 
  of a set of strong measure zero for some cardinal $\kappa>\omega_1$.
  Let $G$ be an $\mathcal M_\kappa$-generic filter.
  Choose any elementary submodel $M$ of $H(\kappa^+)$ with $\kappa,\dot X\in M$,
   $M^\omega\subset M$, and $|M|=\aleph_1$.  Let $\dot X\cap M = \dot X_M$
   and let $G_M= G\cap \mathcal M_{\kappa\cap M}$. By standard results,
    also explained in more detail below, $X_M = \val_{G_M}(\dot X_M)$
    is a strong measure zero set in $V[G_M]$
    and $X_M$ is a subset of $\val_{G}(\dot X)$. 
 Since $\mathcal M_\kappa$ is an $\omega^\omega$-bounding poset,
  the family $\mathcal H = \omega^\omega\cap V$ is a dominating
  family in $V[G_M]$.  In addition, the model $M[G]$ is an elementary
  submodel of $H(\kappa^+)[G]$, while $M[G]\cap 2^\omega
   = V[G_M]\cap 2^\omega$.
    For each $h\in \mathcal H$, choose
  a function $\nu^h\in \Pi_{k\in\omega} 2^{h(k)}$ satisfying
  that $X_M\subset \bigcup_{k\in\omega} [\nu^h(k)]$. 
  By elementarity, for each $x\in \val_G(\dot X)$
  and each $h\in \mathcal H$, $x\in \bigcup_{k\in\omega}[\nu^h(k)]$.
  
  The final model $V[G]$ can be viewed
  as a forcing extension of $V[G_M]$ by the poset
   $\mathcal M_{\kappa\setminus M}$. It is shown
   in \cite{Goldstern}*{Fact 1.16} that $\mathcal M_{\kappa\setminus M}$
    is strongly $\omega^\omega$-bounding \cite{Goldstern}*{Definition 1.13}
    and therefore, by \cite{Goldstern}*{Lemma 2.30}, 
     $X\subset \bigcap_{h\in\mathcal H}\left(\bigcup_{k\in\omega}
      [\nu^h(k)]\right)$ is a subset of $V[G_M]\cap 2^\omega$.
\end{proof}

 Our chosen  index set for $\mathcal M_I$ is
$I = \kappa \times \omega^{<\omega} \times \omega$.
For each $\alpha<\kappa$ and 
each $t\in \omega^{<\omega}$, we
are adding a function,
 $ \dot g_{\alpha,t }  \in \omega^\omega$ 
 defined by the condition that the 
 clopen set $[(~(\alpha,t,n)~,m)]
 \in \mathcal M_I$ forces that $\dot g_{\alpha,t}(n) = m$.

    As usual, for a forcing statement $\varphi$,
    $[[ \varphi]]$ denotes
    the element of $\mathcal M_I$  
  that is equivalent to   $\varphi$ being forced to hold. 
    Take any maximal anti-chain of conditions
    that decide the truth value of
     $\varphi$ and set $[[\varphi]]$
     to be (forcing equivalent to)
      the join of all the conditions
      in the anti-chain that force
      $\varphi$ to hold. 
      Thus, e.g., $[[\dot g_{\alpha,t}(n)=m ]]$
      is the clopen set 
       $ [~((\alpha,t,n)\ ,m)~]$.

For each $B\subset \kappa$, 
let $\tilde B = B\times\omega^{<\omega}\times\omega$.
The mapping
   sending $b\in \mathcal M_{\tilde B}$ to $\pi_{\tilde B}^{-1}(b)$
   defines a complete embedding of $\mathcal M_{\tilde B}$
   into $\mathcal M_I=\mathcal M_{\tilde\kappa}$.  For clarity we
   may sometimes use $[[\varphi ]]_{\tilde \kappa}$ and 
   $[[\varphi ]]_{\tilde B}$ 
   to indicate which poset, $\mathcal M_{\tilde \kappa}$ or
    $\mathcal M_{\tilde B}$, is being referenced.
\medskip

If $\dot A$ is an $\mathcal M_I$-name of a subset
of $\omega^{<\omega}$, we will say that a  
set $B\subset \kappa$ contains the support of $\dot A$ providing
 $[[ t\in \dot A]]\in \{ \pi_{\tilde B}^{-1}(b) : b\in 
 \mathcal M_{\tilde B}\}$
 for every $t\in \omega^{<\omega}$. Similarly, if $b\in \mathcal M_I$,
  we may say that $B$ contains the support of $b$ if
  $b \in \{ \pi_B^{-1}(c) : c\in \mathcal M_{\tilde B}\}$.  
 Since $\mathcal M_I$
 is ccc, for every such $\dot A$  there is a countable
 set $B\subset \kappa$ such that $  B$  contains its support. 
 When $B\subset \kappa$ and  
 contains the support of an $\mathcal M_I$-name
 $\dot A$, 
 we let $\pi_{\tilde B}(\dot A)$ denote the $\mathcal M_{\tilde B}$-name
 where $[[t\in \pi_{\tilde B}(\dot A)]]_{\tilde B} = 
 [[ t\in \dot A]]_{\tilde \kappa}$.

 \medskip

Similarly,   
if $\dot A$ is an $\mathcal M_{\tilde B}$-name for a subset
of $\omega^{<\omega}$ with $B\subset\kappa$,  we will
use $\pi_{\tilde B}^{-1}(\dot A)$ to denote the $\mathcal M_I$-name
where $[[ t\in \pi_{\tilde B}^{-1}(\dot A)]]_I =
[[ t\in \pi_{\tilde B}^{-1}(\dot A)]]_{\tilde \kappa} 
 = \pi_{\tilde B}^{-1}([[m\in \dot A]]_{\tilde B})$.
\bigskip

Now what is the topology on 
 $\omega^{<\omega}$ in the forcing extension? 
\medskip

 For each $\alpha<\kappa$, we define    an $\mathcal M_{I}$-name,
  $\dot W_\alpha$, of a
dense  open subset $W_\alpha$ of the usual rational topology on $\omega^{<\omega}$. 
The support of $\dot W_\alpha$ will be $
\tilde{\{\alpha\}} = \{\alpha\}\times \omega^{<\omega}\times \omega$.
We will let $\dot U_\alpha$ denote the $\mathcal M_{I}$-name 
of the complement, $\omega^{<\omega}\setminus \dot W_\alpha$, of $\dot W_\alpha$.
Clearly $\tilde{\{\alpha\}}$ contains the support of $\dot W_\alpha$
and $\dot U_\alpha$.
\medskip

For each $\alpha\in\kappa$ and $n\in\omega$, 
 let $\dot s_{\alpha,t_n}$ be the $\mathcal M_I$-name for
 an element of $\omega^{<\omega}$ given by
   $t_n^\frown \left(\dot g_{\alpha,t_n}\restriction n+2\right)$. 
Observe that, for any $\sigma   \in \omega^{n+2}$,
 the forcing element $[[ \dot s_{\alpha,t_n}\subset
 t_n^\frown \sigma   ]]$
 has measure less than $\frac{1}{2^{n+2}}$.

   Then $\dot W_\alpha$ is the $\mathcal M_I$-name
   of the  set    
    $\bigcup \{ [ \dot s_{\alpha,  t_n }] : n\in\omega\}$,
    and, as mentioned above, 
      $\dot U_\alpha$ is the $\mathcal M_I$-name
      for $\omega^{<\omega}\setminus \dot W_\alpha$.
     It is worth remarking that for any $s\in \omega^{<\omega}$,
      the value of $[[ s\in \dot W_\alpha]]$ is
      equal to the join of the finite family
         $\{
         [[ \dot s_{\alpha, t_n}\subset s ]] : t_n\subset s\}$.
 \bigskip

For any $B\subset \kappa$, let 
 $\dot \tau_B$ denote the $\mathcal M_I$-name for 
 the topology on $\omega^{<\omega}$ generated by
 the family $\tau_0\cup \{ \dot U_\alpha : \alpha\in B\}$.
 For any countable $B\subset\kappa$, we will abuse
 notation and also let $\dot \tau_B$ denote
 the $\mathcal M_{\tilde B }$-name for the topology
 on $\omega^{<\omega}$ generated by the
 family $\tau_0\cup \{ \pi_B(\dot U_\alpha) : \alpha \in B\}$.

 \bgroup

 \def\proofname{Proof of Claim.~}

  \bigskip

\begin{claim} For any $n_0,\ell\in\omega$ and 
$t_{n_0}^\frown \ell\subset s\in \omega^{<\omega}$,
 \ $[[ t_{n_0}\in \dot U_\alpha ]]\wedge 
[[s\in \dot W_\alpha]]$ has measure less than $\frac{1}{2^{n_0+\ell}}$. 
For emphasis we  note that\label{one}
$[[t_{n_0}\in \dot U_\alpha]]$ and 
$[[s\in \dot W_\alpha]]$ have support $\tilde{\{\alpha\}}$.
\end{claim}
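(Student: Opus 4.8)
The plan is to carry out the whole computation inside the measure algebra $\mathcal M_{\tilde{\{\alpha\}}}$, which is legitimate since, as the statement records, both $[[t_{n_0}\in\dot U_\alpha]]$ and $[[s\in\dot W_\alpha]]$ have support $\tilde{\{\alpha\}}$. First I would expand $[[s\in\dot W_\alpha]]$ via the join formula noted just before the Claim, namely $[[s\in\dot W_\alpha]]=\bigvee\{[[\dot s_{\alpha,t_m}\subseteq s]]:t_m\subseteq s\}$, and split the finitely many witnesses $t_m\subseteq s$ into the \emph{internal} ones, with $t_m\subseteq t_{n_0}$, and the \emph{external} ones, with $t_{n_0}^\frown\ell\subseteq t_m$. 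Since $t_{n_0}^\frown\ell\subseteq s$, every witness falls into exactly one of these classes, and the estimate reduces to bounding, separately, the measure each class contributes after intersection with $[[t_{n_0}\in\dot U_\alpha]]$.

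The external witnesses are the easy half. If $t_{n_0}^\frown\ell\subseteq t_m$ then the order-preserving hypothesis gives $m\ge\operatorname{ind}(t_{n_0}^\frown\ell)\ge n_0+\ell+1$, because the immediate successors $t_{n_0}^\frown 0,\dots,t_{n_0}^\frown\ell$ receive strictly increasing indices all exceeding $n_0$. For each such $t_m$ the event $[[\dot s_{\alpha,t_m}\subseteq s]]$ demands that $\dot g_{\alpha,t_m}\restriction m+2$ equal a fixed $\sigma\in\omega^{m+2}$, so by the observation preceding the Claim its measure is at most $2^{-(m+2)}$. Summing the geometric series over the distinct admissible indices $m\ge n_0+\ell+1$ produces a total of at most $2^{-(n_0+\ell+2)}$, a quarter of the target $2^{-(n_0+\ell)}$.

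The internal witnesses carry all the content, and this step is where I expect the main obstacle to lie. Here $[[t_{n_0}\in\dot U_\alpha]]$ is used decisively: if $t_m\subseteq t_{n_0}$ and $\dot s_{\alpha,t_m}\subseteq s$, then $\dot s_{\alpha,t_m}\not\subseteq t_{n_0}$ (otherwise $t_{n_0}$ would itself lie in $\dot W_\alpha$), and since $\dot s_{\alpha,t_m}$ and $t_{n_0}$ are both initial segments of $s$ this forces $\dot s_{\alpha,t_m}\supseteq t_{n_0}^\frown\ell$. Thus the short internal witnesses, whose $\dot s_{\alpha,t_m}$ would fit inside $t_{n_0}$, contribute nothing, while for the surviving ones the event requires $\dot g_{\alpha,t_m}$ to reproduce $t_{n_0}$ on $[\,|t_m|,|t_{n_0}|)$ and to take the value $\ell$ at position $|t_{n_0}|$; this yields a factor $2^{-(\ell+1)}$ together with the matching cost $2^{-\sum_{j=|t_m|}^{|t_{n_0}|-1}(t_{n_0}(j)+1)}$, and the order-preserving property ties the exponent $\sum_{j=|t_m|}^{|t_{n_0}|-1}(t_{n_0}(j)+1)$ to the index gap between $t_m$ and $t_{n_0}$. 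The delicate point is to arrange the sum of these contributions, taken in conjunction with $[[t_{n_0}\in\dot U_\alpha]]$, so that it stays below $2^{-(n_0+\ell)}-2^{-(n_0+\ell+2)}$: a crude union bound over the internal witnesses only gives something of order $2^{-(\ell+1)}$, so one must genuinely exploit that these events live on disjoint, independent coordinate blocks $\dot g_{\alpha,t_m}$ and that intersecting with $[[t_{n_0}\in\dot U_\alpha]]$ \emph{strictly} contracts each surviving term rather than merely deleting the short witnesses. Combining the controlled internal bound with the external $2^{-(n_0+\ell+2)}$ then yields the strict inequality $\mu<2^{-(n_0+\ell)}$.
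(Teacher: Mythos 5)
Your treatment of the external witnesses ($t_{n_0}^\frown \ell \subseteq t_m \subseteq s$) is correct and is essentially the computation the paper itself performs: such $m$ satisfy $m\ge n_0+\ell+1$, each term has measure at most $2^{-(m+2)}$, and the sum over distinct indices is at most $2^{-(n_0+\ell+2)}$. The genuine problem is exactly the one you flag and then leave open: the internal witnesses $t_m\subsetneq t_{n_0}$ with $|t_m|+m+2>|t_{n_0}|$, whose $\dot s_{\alpha,t_m}$ is too long to fit inside $t_{n_0}$ and so must cover $t_{n_0}^\frown\ell$. The mechanism you hope will rescue the estimate --- that intersecting with $[[t_{n_0}\in\dot U_\alpha]]$ ``strictly contracts each surviving term'' --- is not available. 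For such a surviving $t_m$ the event $[[\dot s_{\alpha,t_m}\subseteq s]]$ depends only on the coordinates $\{\alpha\}\times\{t_m\}\times\omega$, whereas $[[t_{n_0}\in\dot U_\alpha]]$ is the conjunction of the complements of $[[\dot s_{\alpha,t_{m'}}\subseteq t_{n_0}]]$ over the \emph{short} witnesses only (for a surviving $t_m$ that Boolean value is $0$, so its block is left completely unconstrained); the two events are therefore independent, and since $\mu([[t_{n_0}\in\dot U_\alpha]])\ge 1-\sum_{m'}2^{-(m'+2)}\ge \tfrac12$, intersecting with $[[t_{n_0}\in\dot U_\alpha]]$ costs at most a factor of $2$. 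Concretely, take an order-preserving enumeration with $t_j=\langle 0\rangle^j$ for $j\le n_0$, put $t_{n_0}=\langle 0\rangle^{n_0}$, $\ell=0$, $s=\langle 0\rangle^N$ with $N$ large; then each $m$ with $2m+2>n_0$, $m<n_0$ is a surviving internal witness with $\mu([[\dot s_{\alpha,t_m}\subseteq s]])=2^{-(m+2)}$, and already the single term $m=\lceil n_0/2\rceil$ gives $\mu\bigl([[t_{n_0}\in\dot U_\alpha]]\wedge[[s\in\dot W_\alpha]]\bigr)\ge 2^{-(n_0/2+4)}$, which exceeds $2^{-(n_0+\ell)}$ once $n_0>8$. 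So the bound in the form stated cannot be recovered along these lines.

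You should also know that the paper's own proof does not close this case: it asserts that $[[t_{n_0}\in\dot U_\alpha]]\wedge[[s\in\dot W_\alpha]]$ equals $[[t_{n_0}\in\dot U_\alpha]]\wedge\bigvee_{1\le i\le\bar\ell}[[\dot s_{\alpha,t_{n_i}}\subseteq s]]$ with the join taken only over $t_{n_0}\subsetneq t_{n_i}\subseteq s$, i.e.\ it silently discards every internal witness, which your own argument shows is justified only for the short ones. The constructive way out is the one your two estimates already provide: each surviving internal witness carries the factor $2^{-(\ell+1)}$ (from matching the value $\ell$ at position $|t_{n_0}|$ of $s$) times at most $2^{-(m+1)}$ from the remaining coordinates, so the internal total is below $2^{-(\ell+1)}$, and adding the external contribution yields $\mu\le 2^{-(\ell+1)}+2^{-(n_0+\ell+2)}<2^{-\ell}$. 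This weaker, $n_0$-free bound is all that is used downstream: in Claim~\ref{two} one simply chooses $\ell_0$ with $2^{-\ell_0}<\mu(b_1)$, and the rest of that argument, and hence Claims~\ref{three} and~\ref{cfour}, go through unchanged. I would prove and record the bound $2^{-\ell}$ rather than chase $2^{-(n_0+\ell)}$.
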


 \begin{proof} Choose the appropriate $\bar\ell\in\omega$, 
 so that there is a sequence $n_0<n_1<n_2<  \cdots < n_{\bar \ell}$ 
 satisfying that $$\{ t_{n_0},t_{n_1},t_{n_2},\ldots,
  t_{n_{\bar\ell}}\} = \{ t \in \omega^{<\omega}:
    t_0\subseteq t\subseteq s\}~.$$
    Note that $[[ \dot s_ {\alpha,t_{n_{\bar \ell}}}\subset s]]  = 0$.
  More generally, for any $1\leq i<\bar\ell$ such that
   $\mathop{dom}(s) \subset \mathop{dom}(t_{n_i})+n_i $,
     $[[ \dot s_{\alpha,t_{n_i}}\subset s]]$ is also $0$. 
For other values of  $1\leq i <  \bar \ell$,
 $[[ \dot s(\alpha, t_{n_{i}}) \subset s]]$ has measure 
 less than $\frac{1}{2^{n_i+1}}\leq \frac{1}{2^{n_0+\ell+i}}$.
 Since $[[ t\in \dot U_\alpha]]\wedge [[ s\in \dot W_\alpha]]$
 is equal to $[[t\in\dot U_\alpha]]\wedge
   \left(\vee_{1\leq i\leq \bar\ell} [[ \dot s_{ \alpha,t_{n_i}}\subseteq s]]\right)$,
   this proves the Claim.
 \end{proof}

\begin{claim} Let\label{two} 
  $n_0\in \omega $ and let countable $B\subset \kappa$.
For any $\mathcal M_{\tilde B}$-names  $\dot A$ and $\{\dot s_n : n\in \dot A\}$
such that $1\in \mathcal M_{\tilde B}$ forces that
$\dot A\subset \omega$ and that, for $n\in \dot A$,  
   $t_{n_0}^\frown n \subset \dot s_n $ 
  (i.e. that $\{ \dot s_n : n\in \dot A\}$
  is a sequence   
  that converges to $t_{n_0}$ with respect to    $\tau_0$).
  Then, for any $\alpha\in \kappa\setminus B$,   $1 \in \mathcal M_I$ forces
  that  if $t_{n_0}\in \dot U_\alpha$, 
  then $\{ \pi_{\tilde B}^{-1}(\dot s_n) : n\in \pi_{\tilde B}^{-1}(\dot A)\}$ 
   is  mod finite contained in $\dot U_\alpha$.
    \end{claim}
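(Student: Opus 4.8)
The plan is to reduce the statement to a direct application of Claim \ref{one} combined with the independence of disjoint coordinate blocks in the product measure, capped off by an elementary Borel--Cantelli estimate. The key structural observation is that $\dot A$ and the names $\dot s_n$ are supported on $\tilde B$, whereas $[[t_{n_0}\in\dot U_\alpha]]$ and each $[[s\in\dot W_\alpha]]$ are supported on $\tilde{\{\alpha\}}$; since $\alpha\notin B$ these two coordinate blocks are disjoint, so events referring to them are probabilistically independent and their measures factor.

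For each $n\in\omega$ I would consider the ``bad'' event
\[
E_n = [[\,n\in\dot A\,]]\wedge[[\,t_{n_0}\in\dot U_\alpha\,]]\wedge[[\,\pi_{\tilde B}^{-1}(\dot s_n)\in\dot W_\alpha\,]],
\]
recording that $n\in\dot A$, that $t_{n_0}$ lands in $\dot U_\alpha$, and yet $\dot s_n$ lands in $\dot W_\alpha$. Conditioning on the value of $\dot s_n$, write $c_s=[[\,n\in\dot A\,]]_{\tilde B}\wedge[[\,\dot s_n=s\,]]_{\tilde B}$ for each $s\supseteq t_{n_0}^\frown n$; these form an anti-chain in $\mathcal M_{\tilde B}$ with $\sum_s\mu(c_s)\leq 1$, and on $\pi_{\tilde B}^{-1}(c_s)$ the statement ``$\pi_{\tilde B}^{-1}(\dot s_n)\in\dot W_\alpha$'' reduces to the fixed statement ``$s\in\dot W_\alpha$''. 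Hence
\[
E_n=\bigvee_{s\supseteq t_{n_0}^\frown n}\ \pi_{\tilde B}^{-1}(c_s)\wedge[[\,t_{n_0}\in\dot U_\alpha\,]]\wedge[[\,s\in\dot W_\alpha\,]].
\]
Since $\pi_{\tilde B}^{-1}(c_s)$ is supported on $\tilde B$ while $[[t_{n_0}\in\dot U_\alpha]]\wedge[[s\in\dot W_\alpha]]$ is supported on the disjoint block $\tilde{\{\alpha\}}$, the measure of each meet factors, and Claim \ref{one} (applied with $\ell=n$, as every such $s$ extends $t_{n_0}^\frown n$) bounds the second factor by $\frac{1}{2^{n_0+n}}$. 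Summing over the disjoint pieces yields
\[
\mu(E_n)<\frac{1}{2^{n_0+n}}\sum_{s}\mu(c_s)\leq\frac{1}{2^{n_0+n}}.
\]

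To finish I would invoke the first Borel--Cantelli lemma: as $\sum_n\mu(E_n)<\sum_n 2^{-(n_0+n)}<\infty$, the event $\bigwedge_N\bigvee_{n\geq N}E_n$ has measure zero, so $1\in\mathcal M_I$ forces that only finitely many of the $E_n$ hold. In any generic extension in which $t_{n_0}\in\dot U_\alpha$, this asserts exactly that all but finitely many $n\in\dot A$ satisfy $\pi_{\tilde B}^{-1}(\dot s_n)\in\dot U_\alpha$, which is the required mod-finite containment. I expect the only delicate point to be the factorization step, namely passing from the fixed-$s$ bound of Claim \ref{one} to the name $\dot s_n$; the conditioning on the anti-chain $\{c_s\}$ together with the disjointness of $\tilde B$ and $\tilde{\{\alpha\}}$ is precisely what licenses it, and the remaining manipulations are routine product-measure bookkeeping.
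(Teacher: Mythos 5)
Your proof is correct, and it rests on exactly the same two ingredients as the paper's argument: the measure bound of Claim \ref{one} and the fact that $[[t_{n_0}\in\dot U_\alpha]]$ and $[[s\in\dot W_\alpha]]$ are supported on the coordinate block $\tilde{\{\alpha\}}$, which is disjoint from $\tilde B$. Where you differ is in the bookkeeping. The paper first passes to the intermediate extension $V[G_B]$, so that $\dot A$ and the $\dot s_n$ become actual objects (padding with $s_n=t_{n_0}^\frown n$ for $n\notin A$), and then argues by density in the quotient algebra $\mathcal M_{I\setminus\tilde B}$: for each $b_1\leq[[t_{n_0}\in\dot U_\alpha]]$ it picks $\ell_0$ with $2^{-(n_0+\ell_0)}<\mu(b_1)$ and concludes that $b_1$ cannot force the intersection with $\dot W_\alpha$ to be infinite; this requires the (true but slightly delicate) observation that projecting conditions supported on $\tilde{\{\alpha\}}$ into $\mathcal M_{I\setminus\tilde B}$ preserves their measure. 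You instead stay inside $\mathcal M_I$, replace the passage to $V[G_B]$ by conditioning on the antichain $\{c_s\}$ that decides $\dot s_n$ below $[[n\in\dot A]]$, use the product-measure factorization across the disjoint blocks, and close with Borel--Cantelli: the limsup of the events $E_n$ has measure zero and is therefore the zero of the measure algebra. The two routes are computationally identical --- the same tail sum $\sum_n 2^{-(n_0+n)}$ does the work --- but yours avoids the intermediate model entirely and packages the conclusion as a single measure-zero event, while the paper's version makes the convergence statement more concrete by working with an evaluated sequence. One cosmetic point: your inequality $\mu(E_n)<2^{-(n_0+n)}\sum_s\mu(c_s)$ should be taken non-strict in the degenerate case where all $c_s$ are zero, which of course affects nothing.
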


\def\val{\operatorname{val}}

    \begin{proof}  Let $G_B$ be an $\mathcal M_{\tilde B}$-generic filter
    and work in the forcing extension $V[G_B]$. 
Let $A = \val_{G_B}(\dot A)$ and, for each $n\in A$,
 let $s_n = \val_{G_B}(\dot s_n)$. For each $n\in\omega\setminus
 A$, let $s_n = t_{n_0}^\frown n$. It follows that $\{ s_n : n\in\omega\}$
 converges to $t_{n_0}$ with respect to $\tau_0$. In this forcing extension,
  the final model is obtained by forcing with $\mathcal M_{I\setminus B}$,
  and let $b = [[ t_{n_0}\in \dot U_\alpha]]$ be the Boolean value 
  with respect to $\mathcal M_{I\setminus \tilde B}$. 
   Consider any $b_1$  less than $b$.  Choose any $\ell_0\in\omega$
   such that $\sum_{\ell>\ell_0} \frac{1}{2^{n_0+\ell}} =
     \frac{1}{2^{n_0+\ell_0} }< \mu(b_1)$. 
     By Claim \ref{one}, $[[ t_{n_0}\in \dot U_\alpha ]]\wedge
     [[ s_\ell\in \dot W_\alpha ]]$ has measure
     less than $\frac{1}{2^{n_0+\ell}}$ for each $\ell >\ell_0$. 
     Since all these elements of $\mathcal M_I$ have support
     contained in $ \{\alpha\}$ and $\alpha\notin B$,
      it follows that their projections into $\mathcal M_{I\setminus
      \tilde B}$ have the same measure. 
Therefore,      
   $$
   [[ t_{n_0}\in \dot U_\alpha ]]_{I\setminus\tilde B}   
   \wedge 
   [[~(\exists \ell>\ell_0)~~ s_\ell\in \dot W_\alpha]]_{I\setminus \tilde B}
   \leq    
   [[ t_{n_0}\in \dot U_\alpha ]]_{I\setminus \tilde B}
   \wedge 
   (\bigvee_{\ell>\ell_0} [[ s_\ell\in \dot W_\alpha ]]_{I\setminus\tilde B})   
   $$
   has measure
   less than $\frac{1}{2^{n_0+\ell_0}}<\mu(b_1)$. 
    This proves that $b_1$ does not force, with respect to $\mathcal M_{I\setminus
    \tilde B}$, that 
     $\dot W_\alpha\cap \{ s_n : n\in\omega\}$ is infinite. 
     Since $b_1$ was arbitrary, this proves that $[[t_{n_0}\in
     \dot U_\alpha]]_{I\setminus \tilde B}$ forces
     that $\{ s_n : n\in A \} $ is mod finite contained
     in $\dot U_\alpha$ as claimed.
    \end{proof}

\begin{claim}  For each $\lambda \leq \kappa$, $\dot \tau_\lambda$
is forced by $1$ over\label{three} $\mathcal M_I$ to be Fr\'echet-Urysohn.
\end{claim}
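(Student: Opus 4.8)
The plan is to fix an $\mathcal{M}_I$-generic filter $G$, work in $V[G]$, and verify the Fr\'echet--Urysohn property of $\tau_\lambda$ directly: given $S\subseteq\omega^{<\omega}$ and a point $t_{n_0}$ in the $\tau_\lambda$-closure of $S$, I would produce a sequence from $S$ that $\tau_\lambda$-converges to $t_{n_0}$. If $t_{n_0}\in S$ the constant sequence works, so assume $t_{n_0}\notin S$. The first step is to localize: since $\mathcal{M}_I$ is ccc, fix a countable $B\subseteq\kappa$ containing the support of a name $\dot S$ for $S$, so that $S\in V[G_{\tilde B}]$, where $G_{\tilde B}=G\cap\mathcal{M}_{\tilde B}$; note that $\tau_0$ and each $U_\alpha$ with $\alpha\in B$ also belong to $V[G_{\tilde B}]$.

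Second, I would exploit that the topology $\tau_{B\cap\lambda}$ generated by $\tau_0\cup\{U_\alpha:\alpha\in B\cap\lambda\}$ is second countable in $V[G_{\tilde B}]$, being a countable subbasis on a countable set, and hence first countable and Fr\'echet--Urysohn there. Since $B\cap\lambda\subseteq\lambda$ we have $\tau_{B\cap\lambda}\subseteq\tau_\lambda$, so $\overline{S}^{\tau_\lambda}\subseteq\overline{S}^{\tau_{B\cap\lambda}}$; as the $\tau_{B\cap\lambda}$-closure of $S$ is computed from a fixed countable basis and from $S$, both lying in $V[G_{\tilde B}]$, membership in it is absolute, so $t_{n_0}\in\overline{S}^{\tau_{B\cap\lambda}}$ already in $V[G_{\tilde B}]$. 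Applying the Fr\'echet--Urysohn property of $\tau_{B\cap\lambda}$ inside $V[G_{\tilde B}]$ then yields a sequence from $S$ converging to $t_{n_0}$ in $\tau_{B\cap\lambda}$.

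Third, I would reshape this sequence into the form required by Claim \ref{two}. Because it $\tau_0$-converges to $t_{n_0}$, after discarding finitely many terms its terms are proper extensions of $t_{n_0}$, and for each $r\supsetneq t_{n_0}$ only finitely many terms lie in $[r]$; hence it passes through infinitely many distinct immediate successors $t_{n_0}^\frown j$. Choosing one term $s_j\in S$ through each of an infinite set $A$ of such children gives $\{s_j:j\in A\}$ with $t_{n_0}^\frown j\subseteq s_j$, still converging to $t_{n_0}$ in $\tau_{B\cap\lambda}$ since subsequences of convergent sequences converge. In particular $\{s_j:j\in A\}$ is eventually contained in $U_\alpha$ for every $\alpha\in B\cap\lambda$ with $t_{n_0}\in U_\alpha$, and both $A$ and the $s_j$ have $\mathcal{M}_{\tilde B}$-names.

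Finally, I would invoke Claim \ref{two} with this $\dot A$ and $\{\dot s_j\}$: for every $\alpha\in\kappa\setminus B$, the condition $1\in\mathcal{M}_I$ forces that if $t_{n_0}\in U_\alpha$ then $\{\pi_{\tilde B}^{-1}(s_j):j\in A\}$ is mod finite contained in $U_\alpha$. Thus in $V[G]$ the sequence is eventually in $U_\alpha$ for every $\alpha\in\lambda\setminus B$ with $t_{n_0}\in U_\alpha$ as well. Since $\lambda=(B\cap\lambda)\cup(\lambda\setminus B)$, the sequence $\{s_j:j\in A\}$ is eventually inside every subbasic $\tau_\lambda$-neighborhood of $t_{n_0}$, namely $[t_{n_0}]$, each complement $\omega^{<\omega}\setminus[r]$ with $r\not\subseteq t_{n_0}$ (by $\tau_0$-convergence), and each $U_\alpha\ni t_{n_0}$, and therefore eventually in every basic neighborhood; so it $\tau_\lambda$-converges to $t_{n_0}$. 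As $G$ was arbitrary, $1$ forces $\dot\tau_\lambda$ to be Fr\'echet--Urysohn. The crux to get right is the dichotomy between the countably many coordinates in $B$, handled by second countability inside $V[G_{\tilde B}]$, and the remaining coordinates, handled uniformly and essentially for free by Claim \ref{two}; the main technical care lies in reshaping the convergent sequence into the one-term-per-successor form that Claim \ref{two} demands, and in the absoluteness of the $\tau_{B\cap\lambda}$-closure between $V[G_{\tilde B}]$ and $V[G]$.
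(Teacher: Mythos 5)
Your proposal is correct and follows essentially the same route as the paper's proof: localize $\dot S$ to a countable support $B$, use the second countability of the $B$-part of the topology in the intermediate extension $V[G_{\tilde B}]$ to extract a convergent sequence of the form $t_{n_0}^\frown n\subseteq s_n$, and then apply Claim \ref{two} to handle the coordinates in $\kappa\setminus B$. Your added care about working with $B\cap\lambda$, the absoluteness of the $\tau_{B\cap\lambda}$-closure, and the reshaping of the sequence into the one-term-per-successor form only makes explicit steps the paper leaves implicit.
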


\begin{proof}  Let $n_0\in \omega$ and let
 $\dot Y$ be any $\mathcal M_I$-name of a subset of $\omega^{<\omega}$
 and assume that $b\in \mathcal M_I$ forces that $t_{n_0}$ is
 in the $\dot \tau_{\lambda}$-closure of $\dot Y$. Choose
 any countable $B\subset \kappa$ such that the supports
of $b$ and  of $\dot Y$ are contained in $B$. 
Let $b\in G$ for any $\mathcal M_I$-generic filter $G$
and let $G_B = \{ c\in \mathcal M_{\tilde B} : 
\pi_B^{-1}(c)\in G\}$. It follows that 
 $G_B$ is an
$\mathcal M_{\tilde B}$-generic filter and
that $V[G_B]$ is a submodel of $V[G]$ and
that $V[G]$ is an $\mathcal M_{I\setminus \tilde B}$-generic
extension of $V[G_B]$.

 We work in the model $V[G_B]$.
 Note  that  $\val_{G}(\dot Y)$
is equal to $\val_{G_B}(\pi_B(\dot Y))$. Let
 $Y = \val_{G_B}(\pi_B(\dot Y))$ and let $\tau_B$ equal
  $\val_{G_B}(\dot \tau_B)$.  
Note that $t_{n_0}$ is in the $\tau_B$-closure of $Y$. 
Since $\tau_B\supset \tau_0$ has a countable basis, 
it is Fr\'echet-Urysohn,
 and so we may choose a countable $A\subset\omega$ 
 and  a sequence $\{ s_n : n\in A\}\subset Y$
 that $\tau_B$-converges to $t_{n_0}$ and 
 such that, for each $n\in A$, $t_{n_0}^\frown n\subset s_n$.

 Now jump back to $V[G]$ and consider any $\tau_I$-basic open set
 $U$ such that $t_{n_0}\in U$,
  where $\tau_I = \val_{G}(\dot \tau_I)$.
  By the definition of this topology, there is a finite
  set $\{ \alpha_1,\ldots, \alpha_m\}\subset \kappa\setminus B$
  and an element $W\in \tau_B$ such that
  $$t_{n_0}\in W\cap \val_{G}(\dot U_{\alpha_1})
  \cap \cdots \cap
   \val_{G}(\dot U_{\alpha_m})\subset U~.$$
   Since, for each $1\leq i\leq m$,
     $[[ t_{n_0}\in \dot U_{\alpha_i}]]\in G$, 
     it follows from Claim \ref{two}, that
      a cofinite subset of $\{ s_n : n\in A\}$
      is contained in $U$. 
\end{proof}

 We come now to our final claim.

 \begin{claim} For each $\gamma < \kappa$, $1$ forces\label{cfour}
 that $\dot U_\gamma$ does not contain any non-empty
 element of $\dot \tau_\gamma$. In other words, for
 any uncountable cardinal $\lambda\leq \kappa$, $\dot \tau_\lambda$
 has $\pi$-weight equal to $\lambda$.
 \end{claim}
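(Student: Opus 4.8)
The plan is to separate the two assertions. The substance is the first sentence; the ``$\pi$-weight equals $\lambda$'' reformulation then follows by a counting argument. Moreover, inspecting the argument below, the only feature of the index set $\{\alpha:\alpha<\gamma\}$ that is ever used is that it omits $\gamma$, so I would in fact prove the apparently stronger statement that $1$ forces the following: for every finite $F\subseteq\kappa$ with $\gamma\notin F$ and every nonempty basic open set $O=W\cap\bigcap_{\alpha\in F}\dot U_\alpha$ (with $W$ a $\tau_0$-basic open set), $O\not\subseteq\dot U_\gamma$, equivalently $O\cap\dot W_\gamma\neq\emptyset$. Since every nonempty $\dot\tau_\gamma$-open set contains such an $O$ with $F\subseteq\gamma$, this yields the claim exactly as stated. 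So assume toward a contradiction that a condition $b$ forces that such an $O$ is nonempty and that $O\subseteq\dot U_\gamma$; I will find $b'\leq b$ forcing $O\cap\dot W_\gamma\neq\emptyset$.

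The key structural fact I would isolate is that each such $O$ is \emph{cofinally branching below each of its nodes}: if $s\in O$ then $s^\frown k\in O$ for all but finitely many $k\in\omega$. For the $\tau_0$-factor $W$ this is clear, since $s^\frown k\to s$ in $\tau_0$ and $W$ is $\tau_0$-open. For each factor $\dot U_\alpha$ with $\alpha\in F$ this is the combinatorial heart, and it rests on the same length phenomenon already exploited in Claims \ref{one} and \ref{two}: every generator $\dot s_{\alpha,t_n}=t_n^\frown(\dot g_{\alpha,t_n}\restriction n+2)$ of $\dot W_\alpha$ properly extends $t_n$ by at least two levels, so for $s\in\dot U_\alpha$ the only generators that can place an immediate successor $s^\frown k$ into $\dot W_\alpha$ are the finitely many $\dot s_{\alpha,t_n}$ with $t_n\subsetneq s$ and $|\dot s_{\alpha,t_n}|=|s|+1$, and each of these equals $s^\frown k$ for at most one value of $k$. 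Hence only finitely many immediate successors of $s$ leave $\dot U_\alpha$; intersecting over the finitely many $\alpha\in F$ together with the $\tau_0$-factor proves the fact.

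To finish I would use the independence of the coordinate $\gamma$. Choose a countable $B\subseteq\kappa$ with $F\subseteq B$, $\gamma\notin B$, that contains the supports of $b$ and of $O$; force first with $\mathcal M_{\tilde B}$ so that $O$ becomes a concrete nonempty open set in $V[G_B]$ and fix a node $t=t_{n_0}\in O$. The remaining forcing $\mathcal M_{I\setminus\tilde B}$ has $\mathcal M_{\tilde{\{\gamma\}}}$ as an independent factor whose generic still freely decides $\dot g_{\gamma,t}$. Using the cofinal-branching fact I would recursively choose values $k_0,\dots,k_{n_0+1}$, at the $j$-th step avoiding the finitely many successors that leave $O$, so that the partial nodes $t^\frown(k_0),\,t^\frown(k_0,k_1),\dots$ and the final node $t^\frown(k_0,\dots,k_{n_0+1})$ all lie in $O$; the condition deciding $\dot g_{\gamma,t}\restriction(n_0+2)=(k_0,\dots,k_{n_0+1})$ has positive measure and forces $\dot s_{\gamma,t}=t^\frown(k_0,\dots,k_{n_0+1})\in O$. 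As $\dot s_{\gamma,t}\in[\dot s_{\gamma,t}]\subseteq\dot W_\gamma$ by definition, this extension forces $O\cap\dot W_\gamma\neq\emptyset$; a routine density packaging then yields the desired $b'\leq b$. I expect this steering step to be the main obstacle: $O$ is only relatively open and not $\tau_0$-open, so one cannot simply quote density of $\dot W_\gamma$ in $\tau_0$, and the cofinal-branching lemma is precisely what lets the random generator $\dot s_{\gamma,t}$ be driven into $O$.

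For the reformulation with $\lambda\leq\kappa$ uncountable, the upper bound is immediate: the finite intersections of members of $\tau_0\cup\{\dot U_\alpha:\alpha<\lambda\}$ form a base of cardinality $\lambda$, so the $\pi$-weight of $\dot\tau_\lambda$ is at most $\lambda$. For the lower bound, suppose $\mathcal B$ were a $\pi$-base with $|\mathcal B|<\lambda$; each $B\in\mathcal B$ is basic and uses a finite set $F_B\subseteq\lambda$ of indices. For each $\gamma<\lambda$ the set $\dot U_\gamma$ is nonempty and $\dot\tau_\lambda$-open, so some $B_\gamma\in\mathcal B$ satisfies $B_\gamma\subseteq\dot U_\gamma$; by the strengthened first part (applied with $F=F_{B_\gamma}$) we must have $\gamma\in F_{B_\gamma}$. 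Thus $\gamma\mapsto B_\gamma$ maps $\lambda$ into $\mathcal B$ with the fibre over each $B$ contained in the finite set $F_B$, whence $\lambda\leq|\mathcal B|\cdot\aleph_0$ and therefore $|\mathcal B|\geq\lambda$, a contradiction. This argument is uniform in the cofinality of $\lambda$, so it handles singular $\lambda$ as well as regular $\lambda$.
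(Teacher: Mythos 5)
Your proposal is correct, but it takes a genuinely different route from the paper. The paper argues in the reverse orientation: working over $V[G_{\{\gamma\}}]$, it takes the canonical generators $\{s(t_{n_0},\ell):\ell\in\omega\}\subseteq W_\gamma$, which converge to $t_{n_0}$ in $\tau_0$, and applies Claim \ref{two} (with $B=\{\gamma\}$) to conclude that this sequence still converges to $t_{n_0}$ in $\dot\tau_\gamma$ (indeed in $\dot\tau_{\kappa\setminus\{\gamma\}}$); hence every $\dot\tau_\gamma$-neighbourhood of every point meets $W_\gamma$. That proof reuses the measure estimate of Claim \ref{one} packaged inside Claim \ref{two}. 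You instead prove a deterministic combinatorial lemma --- every nonempty basic $\dot\tau_\gamma$-open set $O=W\cap\bigcap_{\alpha\in F}\dot U_\alpha$ is cofinally branching at each of its nodes, because a generator $\dot s_{\alpha,t_n}$ extends $t_n$ by at least two levels and so can knock out at most one immediate successor of a node of $\dot U_\alpha$ --- and then exploit the independence and positive measure of the $\gamma$-block to steer $\dot s_{\gamma,t_{n_0}}$ into $O$ step by step. This avoids Claims \ref{one} and \ref{two} entirely for this part and makes explicit the strengthening (arbitrary finite $F$ with $\gamma\notin F$, not just $F\subseteq\gamma$) that is implicit in the paper's use of Claim \ref{two}. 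A concrete payoff of your version is the counting argument at the end: it yields $\pi$-weight exactly $\lambda$ for \emph{all} uncountable $\lambda\leq\kappa$, including singular $\lambda$, directly from the strengthened claim, whereas the paper obtains the singular case only later, in the proof of the main theorem, via an automorphism of $\mathcal M_I$ induced by a bijection of $\kappa$. The only places needing the routine care you already flag are (i) passing from an arbitrary member of a $\pi$-base to a basic open subset before counting supports, and (ii) the density packaging that converts the argument in $V[G_B]$ into a condition $b'\leq b$ in $V$; both are standard.
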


\begin{proof}  Let $G$ be any generic filter for
 $\mathcal M_I$ and let $G_{\{\gamma\}} = 
 G\cap \mathcal M_{\tilde{\{\gamma\}}}$.  We argue
 from the ground model $V[G_{\{\gamma\}}]$ using
 the $\tau_0$-dense open set $W_\gamma = \val_{G}(\dot W_\gamma)$. 
 For each $n_0\in\omega$, we have the sequence
  $\{ s(t_{n_0},\ell) : \ell\in \omega\}\subset W_\gamma$
  where
   $s(t_{n_0},\ell)  = 
   \val_{G_{\{\gamma\}}}(\pi_{\tilde{\{\gamma\}}}(
   \dot s_{\gamma, t_{n_0}^\frown\ell}))$.
   Since, for each $\ell\in\omega$,
    $t_{n_0}^\frown \ell \subset s(t_{n_0},\ell)$,
     the sequence $\{ s(t_{n_0},\ell) : \ell\in \omega\}$
     converges with respect to $\tau_0$ to $t_{n_0}$. 
    It then follows from Claim \ref{two}, that 
    in the forcing extension $V[G_{\tilde{\{\gamma\}}}][
     G_{I\setminus \tilde{\{\gamma\}}}]$,
      that, for each $n_0\in\omega$,
       $\{ s(t_{n_0},\ell) : \ell \in \omega\}$
       also converges to $t_{n_0}$ with respect
       to the topology $\val_{G}(\dot \tau_\gamma)$. 
    This, of course, implies that for each $n_0\in\omega$,
     $t_{n_0}$ is not in the $\val_{G}(\dot \tau_\gamma)$-interior
     of $\val_G(\dot U_\gamma)$.      
\end{proof}
 \egroup
 
\begin{theorem} For any infinite cardinal $\kappa$,  in the forcing
extension by the random real poset $\mathcal M_\kappa$, 
the $\pi$-weight spectrum for countable regular Fr\'echet-Urysohn
spaces contains all uncountable cardinals that 
are less than or equal to $\kappa$.
\end{theorem}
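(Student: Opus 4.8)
The plan is to exhibit, for each uncountable cardinal $\lambda\leq\kappa$, a single countable space witnessing that $\lambda$ lies in the spectrum, namely the set $\omega^{<\omega}$ equipped with the topology $\tau_\lambda=\val_G(\dot\tau_\lambda)$ for a generic $G$. Because the chosen index set $I=\kappa\times\omega^{<\omega}\times\omega$ has cardinality $\kappa$, the measure algebra $\mathcal M_I$ is isomorphic to $\mathcal M_\kappa$, so $\val_G$ is taken over precisely the extension named in the statement; it therefore suffices to check that $(\omega^{<\omega},\tau_\lambda)$ is a countable regular Fr\'echet-Urysohn space whose $\pi$-weight is exactly $\lambda$. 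The underlying set is visibly countable, and the Fr\'echet-Urysohn property is handed to us directly by Claim \ref{three}. So the real content lies in verifying regularity and in computing the $\pi$-weight.

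For regularity I would first note that $\tau_\lambda$ refines the Hausdorff topology $\tau_0$, hence is Hausdorff. Next, each $\dot U_\alpha$ is forced to be $\tau_\lambda$-clopen: it is open by construction, while its complement $\dot W_\alpha=\bigcup_n[\dot s_{\alpha,t_n}]$ is a union of $\tau_0$-basic clopen sets and hence $\tau_0$-open, a fortiori $\tau_\lambda$-open. Thus the generating family $\tau_0\cup\{U_\alpha:\alpha<\lambda\}$ of $\tau_\lambda$ consists of clopen sets, so $(\omega^{<\omega},\tau_\lambda)$ has a basis of clopen sets. A Hausdorff space with a clopen basis is zero-dimensional and therefore regular, which settles the topological requirement.

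The $\pi$-weight upper bound is a counting argument: a basis for $\tau_\lambda$ is obtained by intersecting the countably many basic $\tau_0$-sets with finite subfamilies of $\{U_\alpha:\alpha<\lambda\}$, a family of cardinality $\lambda^{<\omega}=\lambda$, so the $\pi$-weight is at most $\lambda$. For the lower bound I would argue by contradiction: suppose $\mathcal P$ is a $\pi$-base with $|\mathcal P|<\lambda$. Each $P\in\mathcal P$ is a finite intersection involving a finite index set $S_P$, and since $\lambda$ is uncountable the union $S=\bigcup_{P\in\mathcal P}S_P$ has cardinality $|\mathcal P|\cdot\aleph_0<\lambda=|\lambda|$, so one may pick $\gamma\in\lambda\setminus S$. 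Then $U_\gamma$ is a nonempty $\tau_\lambda$-open set, so the $\pi$-base contains some nonempty $P$ with $P\subseteq U_\gamma$; as $\gamma\notin S_P$, this $P$ is open in the topology generated by $\tau_0$ together with $\{U_\alpha:\alpha\neq\gamma\}$. This is exactly the configuration forbidden by Claim \ref{cfour}, giving the contradiction, so the $\pi$-weight equals $\lambda$.

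The one delicate point — where I expect the genuine obstacle to sit — is matching the topology in which $P$ actually lives against the topology in which Claim \ref{cfour} guarantees that $U_\gamma$ has empty interior. When $\lambda$ is regular this is painless: one chooses $\gamma$ above $\sup S$, so that every $P$ is $\tau_\gamma$-open and the literal statement of Claim \ref{cfour} applies. When $\lambda$ is singular, however, $S$ may be cofinal in $\lambda$ and no such $\gamma$ exists, so one must use the stronger fact established inside the proof of Claim \ref{cfour}: since Claim \ref{two} is available for every index $\alpha\neq\gamma$, the witnessing sequences lying in $W_\gamma$ converge to each $t_{n_0}$ not merely in $\tau_\gamma$ but in the finer topology generated by all $U_\alpha$ with $\alpha\neq\gamma$. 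Hence $U_\gamma$ has empty interior in that finer topology, which is precisely the topology in which $P$ is open, and the lower bound then goes through uniformly for singular and regular $\lambda$ alike. Carrying out this verification for every uncountable $\lambda\leq\kappa$ yields the asserted containment in the $\pi$-weight spectrum.
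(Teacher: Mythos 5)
Your proposal is correct, and for regular $\lambda$ it coincides with the paper's argument (countability and the Fr\'echet--Urysohn property from Claim \ref{three}, regularity from the clopen generating family, the upper bound by counting finite intersections, and the lower bound by pushing a putative small $\pi$-base down to a set $S$ of fewer than $\lambda$ indices and finding $\gamma$ outside it). Where you genuinely diverge is the singular case. The paper keeps Claim \ref{cfour} in its literal form (only the initial-segment topology $\tau_\gamma$) and instead exploits the homogeneity of the measure algebra: a ground-model bijection $f$ of $\mu$ carrying $S$ onto an initial segment induces an automorphism of $\mathcal M_I$ and hence a homeomorphism $H_f$ with $H_f(U_\alpha)=U_{f(\alpha)}$, so the failure of $\tau_0\cup\{U_\alpha:\alpha<\lambda\}$ to be a $\pi$-base transfers to $\tau_0\cup\{U_\alpha:\alpha\in S\}$. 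You instead observe that the proof of Claim \ref{cfour} actually yields more than its statement: since Claim \ref{two} applies to every $\alpha\in\kappa\setminus\{\gamma\}$ (the witnessing sequences have support $\tilde{\{\gamma\}}$), the sequences $\{s(t_{n_0},\ell):\ell\in\omega\}$ converge to $t_{n_0}$ in the topology generated by $\tau_0\cup\{U_\alpha:\alpha\neq\gamma\}$, so $U_\gamma$ has empty interior there, which disposes of regular and singular $\lambda$ uniformly with no automorphism needed. Both routes are sound; yours buys a single uniform argument at the cost of restating (though not reproving) Claim \ref{cfour} in its stronger form, while the paper's buys the ability to quote the claim verbatim at the cost of invoking the permutation symmetry of $\mathcal M_I$. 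Two small points worth making explicit in a final write-up: one should note that $U_\gamma\neq\emptyset$ (the root $t_0$ lies in no $[\dot s_{\gamma,t_n}]$, as each $\dot s_{\gamma,t_n}$ has positive length), since the lower-bound argument needs a \emph{non-empty} open set missed by the candidate $\pi$-base; and the reduction to basic open sets (replacing each member of a putative $\pi$-base by a non-empty basic subset) should be recorded, though it is routine.
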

 
\begin{proof}  The forcing poset $\mathcal M_\kappa$ is 
isomorphic to the poset $\mathcal M_I$ where $I = \kappa\times
 \omega^{<\omega}\times \omega$. Let, for $\alpha<\kappa$,
 the $\mathcal M_I$-name $\dot U_\alpha$ be defined as above.
  Now let $G$ be an $\mathcal M_I$-generic filter,
  and for each $\alpha < \kappa$, let
   $U_\alpha = \val_{G}(\dot U_\alpha)$. 

  For each cardinal $\omega_1\leq \lambda \leq \kappa$,
  let $ \tau_\lambda$ denote the topology generated
  by using the family of
 finite intersections from
   the set $\tau_0\cup \{  U_\alpha : \alpha < \lambda\}$
    as a (clopen) base for a topology.
 It follows from Claim \ref{three} that $\tau_\lambda$
   is Fr\'echet-Urysohn. 

 It follows from Claim \ref{cfour}, that, for all
  $\gamma < \lambda$, the family
  of finite intersections from the set
   $\tau_0\cup \{  U_\alpha : \alpha < \gamma\}$ 
   does not form a $\pi$-base for $\tau_\lambda$.
   This proves that for
   every regular $\omega_1\leq\lambda\leq\kappa$
   the topology $\tau_\lambda$ is Fr\'echet-Urysohn
   with $\pi$-weight $\lambda$. 
   
Now we prove the same for any singular cardinal
   $\mu\leq\kappa$.  
  Consider any $S\subset \mu$ with $|S|=\lambda<\mu$.
 Choose any   bijection $f$ in the ground
  model from 
   $\mu$ to $\mu $
  that sends $S$ to the initial segment $\lambda$. 
  It then follows that 
 there is a homeomorphism
   $H_f$
   from $(\omega^{<\omega},\tau_\mu)$ to
    $(\omega^{<\omega},\tau_{\mu})$ that
    satisfies $H_f(U_\alpha) = U_{f(\alpha)}$
    for all $\alpha< \mu$.
This is proven using that $f$  can be used to induce
an automorphism on $\mathcal M_I$ that lifts 
canonically to $\mathcal M_I$-names that would
send $\dot W_\alpha$ to $\dot W_{f(\alpha)}$
for all $\alpha <\mu$ and $\dot W_\beta $ to 
$\dot W_\beta$ for all $\mu\leq \beta<\kappa$.
By Claim \ref{three}, $U_\lambda $ does not contain
any non-empty element of $\tau_\lambda$,
and so, by invoking the homeomorphism $H_f^{-1}$,
we have that the family of  clopen sets generated
by the family
$\tau_0\cup \{ U_{\alpha} : \alpha \in S\}$
is not a $\pi$-base for $\tau_\mu$.
\end{proof} 

 The following are the two most basic open problems
 about the $\pi$-weight spectrum of the countable regular
  Fr\'echet-Urysohn spaces. The first was  astutely  asked by J. Moore
  during a talk by the author at the 2012 Summer Topology conference.
  
\begin{question} Is there a countable regular Fr\'echet-Urysohn space
with $\pi$-weight equal to $\mathfrak b$?
\end{question}

\begin{question} Is it consistent that there are uncountable cardinals,
 $\lambda_1 < \lambda_2 < \lambda_3$ such that 
  $\lambda_2$ is not an element, while $\lambda_1, \lambda_3$
  are elements of the $\pi$-weight spectrum of the countable
   regular Fr\'echet-Urysohn spaces?
 \end{question}

\begin{bibdiv}

\def\cprime{$'$} 

\begin{biblist}
\bib{Abraham}{article}{
   author={Abraham, Uri},
   title={Proper forcing},
   conference={
      title={Handbook of set theory. Vols. 1, 2, 3},
   },
   book={
      publisher={Springer, Dordrecht},
   },
   isbn={978-1-4020-4843-2},
   date={2010},
   pages={333--394},
   review={\MR{2768684}},
   doi={10.1007/978-1-4020-5764-9\_6},
}

\bib{Barman}{article}{
   author={Barman, Doyel},
   author={Dow, Alan},
   title={Selective separability and ${\rm SS}^+$},
   journal={Topology Proc.},
   volume={37},
   date={2011},
   pages={181--204},
   issn={0146-4124},
   review={\MR{2678950}},
}

\bib{BaDo2}{article}{
   author={Barman, Doyel},
   author={Dow, Alan},
   title={Proper forcing axiom and selective separability},
   journal={Topology Appl.},
   volume={159},
   date={2012},
   number={3},
   pages={806--813},
   issn={0166-8641},
   review={\MR{2868880}},
   doi={10.1016/j.topol.2011.11.048},
}

\bib{DowFrechet}{article}{
  author={Dow, Alan},
   title={$\pi$-weight and the Fr\'echet-Urysohn property},
   journal={Topology Appl.},
   volume={174},
   date={2014},
   pages={56--61},
   issn={0166-8641},
   review={\MR{3231610}},
   doi={10.1016/j.topol.2014.06.013},
}

\bib{Gerlits}{article}{
   author={Gerlits, J.},
   author={Nagy, Zs.},
   title={Some properties of $C(X)$. I},
   journal={Topology Appl.},
   volume={14},
   date={1982},
   number={2},
   pages={151--161},
   issn={0166-8641},
   review={\MR{0667661}},
   doi={10.1016/0166-8641(82)90065-7},
}
\bib{Goldstern}{article}{
   author={Goldstern, Martin},
   author={Judah, Haim},
   author={Shelah, Saharon},
   title={Strong measure zero sets without Cohen reals},
   journal={J. Symbolic Logic},
   volume={58},
   date={1993},
   number={4},
   pages={1323--1341},
   issn={0022-4812},
   review={\MR{1253925}},
   doi={10.2307/2275146},
}

\bib{haberMiller}{misc}{
      author={Haberl, Valentin},
      author={Szewczak, Piotr},
      author={Zdomskyy, Lyubomyr},
       title={Concentrated sets and $\gamma$-sets in the Miller model},
        date={2024},
         url={https://arxiv.org/abs/2310.03864},
}

\bib{Hrusak-Ramos}{article}{
   author={Hru\v s\'ak, M.},
   author={Ramos-Garc\'ia, U. A.},
   title={Malykhin's problem},
   journal={Adv. Math.},
   volume={262},
   date={2014},
   pages={193--212},
   issn={0001-8708},
   review={\MR{3228427}},
   doi={10.1016/j.aim.2014.05.009},
}

\bib{RZmsep}{article}{
   author={Repov\v s, Du\v san},
   author={Zdomskyy, Lyubomyr},
   title={$M$-separable spaces of functions are productive in the Miller
   model},
   journal={Ann. Pure Appl. Logic},
   volume={171},
   date={2020},
   number={7},
   pages={102806, 8},
   issn={0168-0072},
   review={\MR{4099834}},
   doi={10.1016/j.apal.2020.102806},
}

\bib{Rothberger}{article}{
   author={Rothberger, F.}, 
   title={ Eine Versch{\"u}rfung der Eigenschaft},
   journal={Fund. Math.},
   volume={30},
   date={1938}, 
   pages={50-55} 
}

 \end{biblist}
\end{bibdiv}

\end{document}